\renewcommand{\baselinestretch}{\baselinestretch}
\renewcommand{\baselinestretch}{1.1}
\numberwithin{equation}{section}
\newcommand{\skipitems}[1]{%
  \addtocounter{\@enumctr}{#1}%
}
\newtheorem{thm}{Theorem}[section]
\newtheorem{lem}[thm]{Lemma}
\newtheorem{prop}[thm]{Proposition}
\theoremstyle{definition}
\theoremstyle{remark}
\newtheorem{rmk}[thm]{Remark}
\numberwithin{equation}{section}
\newcommand{\lra}{{\longrightarrow}}
\newcommand{\gen}{\text{gen}}
\newcommand{\ord}{\text{ord}}
\newcommand{\z}{{\mathbb Z}}
\begin{document}
\title{A sum of squares not divisible by a prime}

\author{Kyoungmin Kim and Byeong-Kweon Oh}

\address{Department of Mathematics, Sungkyunkwan University, Suwon 16419, korea}
\email{kiny30@skku.edu}
\thanks{This work of the first author was supported by the National Research Foundation of Korea(NRF) grant funded by the Korea government(MSIT) (NRF-2016R1A5A1008055  and NRF-2018R1C1B6007778)}

\address{Department of Mathematical Sciences and Research Institute of Mathematics, Seoul National University, Seoul 08826, Korea}
\email{bkoh@snu.ac.kr}
\thanks{This work of the second author was supported by the National Research Foundation of Korea (NRF-2017R1A2B4003758).}

\subjclass[2010]{Primary 11E25, 11E45} \keywords{A sum of squares not divisible by a prime}


\begin{abstract}  Let $p$ be a prime. We define $S(p)$ the smallest number $k$ such that every positive integer is a sum of at most $k$ squares of integers that are not divisible by $p$. In this article, we prove that $S(2)=10$, $S(3)=6$, $S(5)=5$, and $S(p)=4$ for any prime $p$ greater than $5$.   In particular, it is proved that every positive integer  is a sum of at most four squares not divisible by $5$, except the unique positive integer $79$.
\end{abstract}

\maketitle


\section{Introduction}
The famous four square theorem says that every non-negative integer is a sum of at most $4$ squares, that is, for the quaternary quadratic form $f(x,y,z,t)=x^2+y^2+z^2+t^2$, the Diophantine equation $f(x,y,z,t)=n$ always has an integer solution for any non-negative integer $n$.  
  After Lagrange \cite {lag}  proved this celebrated theorem, it was generalized in several directions. Ramanujan \cite {ramanujan} determined that there are exactly $55$  positive definite integral diagonal quaternary quadratic forms. Later, Dickson  \cite {dic} confirmed Ramanujan's assertion is correct except the quaternary quadratic form $x^2+2y^2+5z^2+5t^2$, which represents all non-negative integers, except the unique integer $15$. Conway and Schneeberger  proved, so called, $15$-Theorem which says that any positive definite integral quadratic form representing  $1,2,3,5,6,7,10,14$, and $15$ represents all non-negative integers. Recently, Bhargava \cite {b} provided a very simple and elegant proof of $15$-Theorem.
 
  Another generalization was initiated by Mordell \cite {m} and Ko \cite {ko}. In those papers, they proved that every positive definite integral quadratic form of rank $n$ less than or equal to  $5$ is represented by a sum of $n+3$ squares. In fact, there is a quadratic form of rank $6$ that is not represented by a sum of any number of integral squares. One of such quadratic forms is the root lattice $E_6$.    
  
In this article, we generalize Lagrange's four square theorem in another direction. Let $p$ be a prime. We say an integer $n$ is  {\it a sum of $k$ squares not divisible by $p$} if there are integers $x_1,x_2,\dots,x_k$ such that
$$
n=x_1^2 +x_2^2+\cdots+x_k^2 \quad \text{and} \quad (p,x_1x_2\cdots x_k)=1.
$$        
We define $S(p)$ the smallest integer $k$ such that any positive integer is a sum of at most $k$ squares not divisible by $p$.  In this article, we prove that 
$$
S(2)=10,  \  \   \ S(3)=6,  \  \   \   S(5)=5, \ \  \  \text{and}  \  \  \  S(p)=4 \  \ \    \text{for any prime $p \ge 7$}.
$$ 
 In particular, it is proved that every positive integer is a sum of at most four squares not divisible by $5$,  except the unique integer $79$.
 
Throughout this article, we always assume that a quadratic form of rank $n$ 
$$
f(x_1,x_2,\dots,x_n)=\sum_{i,j=1}^n a_{ij} x_ix_j \qquad (a_{ij}=a_{ji})
$$
is positive definite and integral, that is $a_{ij} \in \z$ for any $i,j$. The corresponding symmetric matrix $M_f$ to the quadratic form $f$ is defined by $M_f=(a_{ij})$. The discriminant $df$ of the quadratic form $f$ is defined by the determinant of the corresponding symmetric matrix $M_f$. 
 If $f$ is diagonal, that is, $a_{ij}=0$ for any $i\ne j$, then we write 
$$
f = \langle a_{11},a_{22},\dots,a_{nn}\rangle.
$$
  We say an integer $a$ is {\it represented by $f$} if there are integers $x_1,x_2,\dots,x_n$ such that $a=f(x_1,x_2,\dots,x_n)$. In this case, we write $a\, \lra f$. 
  In particular, we say $a$ is {\it a sum of $k$ squares} if $a$ is represented by the quadratic form 
$I_k=\langle 1,1,\dots,1\rangle$. We define
$$
R(a,f)=\{ (x_1,x_2,\dots,x_n) \in \z^n : a=f(x_1,x_2,\dots,x_n)\} \  \  \text{and} \ \   r(a,f)=\vert R(a,f)\vert.
$$
Note that $r(a,f)$ is finite, for we are assuming that $f$ is positive definite. 

For two quadratic forms $f$ and $g$ of rank $n$, we say $f$ is {\it isometric} to $g$ if there is an integral matrix $T \in M_n(\z)$ such that $T^tM_fT=M_g$. We say $f$ is isometric to $g$ over the $p$-adic integer ring $\z_p$ if there is a matrix $T \in M_n(\z_p)$ satisfying the above property. The isometry group $O(f)$ of $f$ is defined by
$$
O(f)=\{ T \in M_n(\z) : T^tM_fT=M_f\} \ \  \text{and} \  \  o(f)=\vert O(f)\vert.
$$ 

 The genus $\gen(f)$ of $f$ is the set of all quadratic forms that are isometric to $f$ over  $\z_p$ for any prime $p$.  The class number $h(f)$ of $f$ is the number of  isometric classes in the genus of $f$.  We say an integer $a$ is {\it represented by the genus of $f$} if there is a quadratic form $f' \in \gen(f)$ that represents $a$.  Note that $a$ is represented by the genus of $f$ if and only if the equation $a=f(x_1,x_2,\dots,x_n)$ always has a solution $(x_1,x_2,\dots,x_n) \in \z_p^n$ for any prime $p$ (see, for example, 102:5 of \cite{om}). 

For a quadratic form $f$ and an integer $a$, we define
$$
w(f)=\sum_{[g] \in \gen(f)} \frac1{o(g)} \qquad \text{and} \qquad r(a,\gen(f))=\frac1{w(f)}\sum_{[g] \in \gen(f)} \frac{r(a,g)}{o(g)},
$$ 
where $[g]$ is the isometric class containing $g$ in the genus  of $f$. Note that if $h(f)=1$, then we have $r(a,\gen(f))=r(a,f)$. 

Any unexplained notations and terminologies can be found in  \cite {ki} or \cite {om}.

\section{A sum of squares not divisible by $2$ or $3$}

Let $p$ be a prime. We say that an integer $n$ is  {\it a sum of $k$ squares not divisible by $p$} if there are integers $x_1,x_2,\dots,x_k$ such that 
$$
n=x_1^2+x_2^2+\cdots+x_k^2 \quad \text{and} \quad (p,x_1x_2\cdots x_k)=1.
$$ 
If $n$  is a sum of $k$ squares not divisible by $p$, then we write  $n  \stackrel{p}{\lra} I_k$. We further define $S(p)$
the smallest integer $k$ such that any positive integer is a sum of at most $k$ squares not divisible by $p$. Note that $S(p)\ge 4$ for any prime $p$.

\begin{lem}\label {t-square}
Let $f$ be a ternary quadratic form and let $p$ be a prime not dividing $2df$. Let $n$ be a positive integer and let $\ord_p(n)=\lambda_p$. If $n$ is represented by the genus of $f$, then we have
$$ 
\begin{array}  {ll}
\displaystyle \frac{r(p^2n,\text{gen}(f))}{r(n,\gen(f))}
&=p\cdot\displaystyle  \!\prod_{q\vert 2df} \frac{\alpha_{q}(p^2n,f)}{\alpha_{q}(n,f)}\displaystyle  \prod_{q\nmid 2df} \frac{\alpha_{q}(p^2n,f)}{\alpha_{q}(n,f)}\\[0.5cm]
&=\displaystyle \left( \frac{ \displaystyle p^{\left[\frac{\lambda_p}{2}\right] +2}-1-\left(\frac{-np^{-2\left[\frac{\lambda_p}{2}\right]}\cdot df}{p}\right)\left(p^{\left[\frac{\lambda_p}{2}\right] +1}-1\right)}{ \displaystyle p^{\left[\frac{\lambda_p}{2}\right] +1}-1-\left(\frac{-np^{-2\left[\frac{\lambda_p}{2}\right]}\cdot df}{p}\right)\left(p^{\left[\frac{\lambda_p}{2}\right]}-1\right)} \right).
\end{array}
$$
Here $[x]$ is the greatest integer not exceeding $x$ and $\left(\frac{\cdot}{p}\right)$ is the Legendre symbol.
\end{lem}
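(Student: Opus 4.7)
The plan is to apply the Minkowski--Siegel formula, which for a ternary positive definite form expresses
$$
r(n,\gen(f)) \;=\; \alpha_\infty(n,f)\cdot\prod_q \alpha_q(n,f),
$$
with $\alpha_\infty(n,f)$ proportional to $\sqrt{n}$ and the product running over all (finite) primes $q$. Forming the ratio $r(p^2n,\gen(f))/r(n,\gen(f))$, the archimedean factor contributes $\sqrt{p^2n/n}=p$, and the remaining product over finite primes naturally splits into the contributions from $q\mid 2df$ and $q\nmid 2df$. This gives the first displayed equality of the lemma.

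Next I would show that the local factor at every prime $q\neq p$ is trivial. Indeed, the hypothesis $p\nmid 2df$ together with $q\ne p$ places $p$ in $\z_q^\times$, so the map $x\mapsto p\,x$ is a $\z_q^n$-bijection carrying solutions of $f(x)\equiv n\pmod{q^k}$ onto solutions of $f(x)\equiv p^2n\pmod{q^k}$. Consequently $\alpha_q(p^2n,f)=\alpha_q(n,f)$ for every $q\ne p$, and both products in the first equality collapse to the single factor at $q=p$.

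The remaining and central step is the evaluation of $\alpha_p(p^2n,f)/\alpha_p(n,f)$. Since $p$ is odd and $p\nmid df$, the form $f$ is $\z_p$-equivalent to a diagonal unimodular ternary form, so the standard closed-form formula for local densities of such forms at good primes applies (see, for example, \cite{ki}). Writing $n=p^{\lambda_p}u$ with $u\in\z_p^\times$ and separating the cases in which $\lambda_p$ is even or odd, a direct evaluation of both $\alpha_p(p^{\lambda_p+2}u,f)$ and $\alpha_p(p^{\lambda_p}u,f)$ as geometric sums of Gauss-sum contributions, followed by telescoping, produces a quotient of the required shape.

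The main obstacle is the bookkeeping in this last step. One must verify that the parity of $\lambda_p$, the discriminant $df$, and the unit part of $n$ conspire so that all of the intermediate character values combine into the single Legendre symbol $\left(\frac{-np^{-2[\lambda_p/2]}\cdot df}{p}\right)$ appearing in the statement, and that the jump from exponent $\lambda_p$ to $\lambda_p+2$ in the argument yields exactly the numerator and denominator $p^{[\lambda_p/2]+2}-1-(\ast)(p^{[\lambda_p/2]+1}-1)$ and $p^{[\lambda_p/2]+1}-1-(\ast)(p^{[\lambda_p/2]}-1)$ indicated. Once this algebraic simplification is pushed through, the second equality of the lemma follows.
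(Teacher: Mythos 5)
Your proposal is correct and follows essentially the same route as the paper: apply the Minkowski--Siegel formula (the archimedean factor contributes $p$), observe that $\alpha_q(p^2n,f)=\alpha_q(n,f)$ for $q\ne p$, and evaluate the remaining ratio $\alpha_p(p^2n,f)/\alpha_p(n,f)$ from an explicit local density formula at a good prime --- the paper simply quotes this formula from Theorem 3.1 of \cite{y} and notes that the stated quotient follows. Your justification of the $q\ne p$ step via the unit scaling $x\mapsto px$ is in fact more explicit than what the paper writes.
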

\begin{proof}
By the Minkowski-Siegel formula, we have
$$
r(n,\text{gen}(f))=\pi^{\frac{3}{2}}\cdot\Gamma \left(\frac{3}{2}\right)^{-1}\cdot{\sqrt{\frac {n}{df}}}\cdot\prod_{q<\infty}\alpha_q({n,f}),
$$
where $\alpha_q$ is the local density over $\z_q$. Note that by Theorem 3.1 in \cite {y}, we have
$$
\alpha_p(n,f)=
\begin{cases}
\displaystyle 1+\frac{1}{p}-\frac{1}{p^{\frac{\lambda_p+1}2}}-\frac{1}{p^{\frac{\lambda_p+3}2}} & \text{if $\lambda_p$ is odd},\\[0.3cm]
\displaystyle 1+\frac{1}{p}-\frac{1}{p^{\frac{\lambda_p+2}{2}}}+\left(\frac{-p^{-\lambda_p}n\cdot df}{p}\right)\frac{1}{p^{\frac{\lambda_p+2}{2}}} & \text{otherwise}.
\end{cases}
$$
Hence the lemma follows directly from this.
\end{proof}

\begin{lem}\label {class1}
Let $f$ be a ternary quadratic form and let $n$ be a positive integer. If the class number of $f$ is 1, then for any prime $p$ not dividing  $2df$, we have
$$
r(p^2n,f)-r(n,f)>0,
$$
provided that $p^2n$ is represented by $f$.
\end{lem}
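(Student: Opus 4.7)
The plan is to reduce everything to the explicit ratio from Lemma~\ref{t-square} and then to verify a short algebraic inequality. Since $h(f)=1$, for every positive integer $a$ we have $r(a,f)=r(a,\gen(f))$, so it is enough to prove $r(p^2n,\gen(f))>r(n,\gen(f))$, and for this I would invoke Lemma~\ref{t-square}. The lemma, however, is stated under the hypothesis that $n$ is represented by $\gen(f)$, so the first thing to check is that $n$ itself is represented by $f$.

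To verify this, I would argue locally. For every prime $q\ne p$, $p^2$ is a unit in $\z_q$, so $p^2n$ being represented by $f$ over $\z_q$ immediately gives the same for $n$. At the prime $p$ we are assuming $p\nmid 2df$, so $f$ is unimodular over $\z_p$; since a unimodular ternary quadratic form over $\z_p$ for odd $p$ represents every element of $\z_p$, the integer $n$ is represented over $\z_p$ as well. Hence $n$ is represented by $\gen(f)$, and the hypothesis $h(f)=1$ forces $r(n,f)>0$.

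Now write $e=[\lambda_p/2]$ and $\epsilon=\left(\frac{-np^{-2e}\cdot df}{p}\right)\in\{-1,0,1\}$. Lemma~\ref{t-square} expresses $r(p^2n,f)/r(n,f)$ as $N/D$ with
$$
N=p^{e+2}-1-\epsilon(p^{e+1}-1),\qquad D=p^{e+1}-1-\epsilon(p^e-1).
$$
A direct expansion gives
$$
N-D=p^{e+1}(p-1)-\epsilon\, p^e(p-1)=(p-1)\,p^e\,(p-\epsilon).
$$
Because $p\nmid 2df$ forces $p$ to be odd (hence $p\ge 3$) and $\epsilon\le 1$, each factor is strictly positive, so $N>D$. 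A quick case split also shows $D>0$: for $\epsilon=-1,0,1$ the denominator equals $p^{e+1}+p^e-2$, $p^{e+1}-1$, and $p^e(p-1)$, respectively. Multiplying the inequality $N>D$ by the positive quantity $r(n,f)/D$ then yields $r(p^2n,f)-r(n,f)>0$.

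The main obstacle is really just the local verification that $n$ is represented at $p$; once one accepts this, the inequality collapses to the one-line identity $N-D=(p-1)p^e(p-\epsilon)>0$, and everything else is formal bookkeeping with the Legendre-symbol cases.
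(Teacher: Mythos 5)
Your proof is correct and follows essentially the same route as the paper: use $h(f)=1$ to identify $r(\cdot,f)$ with $r(\cdot,\gen(f))$ and then read the inequality off the explicit ratio in Lemma~\ref{t-square}. You simply make explicit two points the paper leaves implicit --- the local check that $n$ itself is represented (so the ratio is defined and $r(n,f)>0$) and the computation $N-D=(p-1)p^{e}(p-\epsilon)>0$ with $D>0$ --- and both of these are verified correctly.
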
     
\begin{proof} Since we are assuming that $h(f)=1$, we have by Lemma \ref {t-square},
$$
\frac {r(p^2n,f)}{r(n,f)}=\frac {r(p^2n,\gen(f))}{r(n,\gen(f))}>1.
$$
This completes the proof.   \end{proof}

\begin{prop} Every positive integer is a sum of at most $10$ squares of odd integers, and in fact, $S(2)=10$.
\end{prop}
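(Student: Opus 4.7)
My plan is to prove the two inequalities $S(2)\le 10$ and $S(2)\ge 10$ separately. The central observation throughout is that every odd square satisfies $x^2\equiv 1\pmod 8$, so any representation of a positive integer $n$ as a sum of $k$ odd squares forces $n\equiv k\pmod 8$.

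For the lower bound $S(2)\ge 10$, I would exhibit a single integer that is not a sum of at most $9$ odd squares. The natural candidates lie in the residue class $n\equiv 2\pmod 8$, where the congruence constraint forces $k\equiv 2\pmod 8$, hence $k\in\{2,10,18,\dots\}$; in particular, $k\le 9$ would force $k=2$. I would then pick an integer $n\equiv 2\pmod 8$ that fails to be a sum of two squares. The smallest such witness is $n=42=2\cdot 3\cdot 7$, which fails Fermat's two-square criterion because the primes $3$ and $7$ (both $\equiv 3\pmod 4$) occur to odd exponents. Hence $42$ requires at least $10$ odd squares, so $S(2)\ge 10$.

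For the upper bound $S(2)\le 10$, the main tool is Gauss's three-triangular-number theorem: every non-negative integer is a sum of three triangular numbers $T_j=j(j+1)/2$. Since $T_0=0$, every non-negative integer is in fact a sum of any prescribed number $k\ge 3$ of triangular numbers. The bridge to the problem is the identity
$$
\sum_{i=1}^{k}(2y_i+1)^2 \;=\; k + 8\sum_{i=1}^{k} T_{y_i},
$$
which shows that for $n\equiv k\pmod 8$, the integer $n$ is a sum of $k$ odd squares if and only if $(n-k)/8$ is a sum of $k$ triangular numbers. For each residue $r\in\{0,1,2,3,4,5,6,7\}$ modulo $8$ I would select
$$
k \in \{8,\,9,\,10,\,3,\,4,\,5,\,6,\,7\}
$$
respectively. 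Each such $k$ lies in $[3,10]$, so Gauss's theorem applies and covers every $n\ge k$ in the given residue class. The only cases this leaves uncovered are $n=1$ and $n=2$ (in the residues $r=1,2$), which are handled directly by $1=1^2$ and $2=1^2+1^2$.

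I do not foresee a serious obstacle here. The heavy machinery of Lemmas~\ref{t-square} and~\ref{class1} is not needed for $p=2$, since the reduction to triangular numbers renders the problem essentially elementary. The only subtle point is identifying the correct witness for the lower bound, and $42$ is the smallest integer that works.
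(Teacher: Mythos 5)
Your proof is correct and is essentially the paper's argument in triangular-number clothing: the identity $(2y+1)^2=8T_y+1$ turns Gauss's three-triangular-number theorem for $(n-k)/8$ into exactly Legendre's three-square theorem applied to $n-(k-3)\equiv 3\pmod 8$ with $k-3$ copies of $1^2$ padded in, which is the paper's decomposition, and your lower bound via an integer $\equiv 2\pmod 8$ that is not a sum of two squares (your explicit witness $42$) is the same witness class the paper uses. No gaps.
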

\begin{proof}
 If $n\equiv 3 \pmod 8$, then by Legendre's three-square theorem, there are integers $a_1,a_2$, and $a_3$ such that
$$
n=a_1^2+a_2^2+a_3^2  \quad \text{and} \quad (2,a_1a_2a_3)=1.
$$
Hence $n \stackrel{2}{\lra} I_3$.  Assume the $n\equiv t \pmod 8$ for $3\le t \le 8$. Since $n-(t-3)\equiv 3 \pmod 8$, we have
$n \stackrel{2}{\lra} I_t$. Next assume that $n\equiv 1 \pmod 8$. If $n$ is a square of an integer, then $n \stackrel{2}{\lra} I_1$. If $n$ is not a square, then we have $n \stackrel{2}{\lra} I_9$, for $n-6\equiv 3 \pmod 8$. Finally, assume that $n \equiv 2 \pmod 8$. If $n$ is a sum of two squares, then $n \stackrel{2}{\lra} I_2$. If $n$ is not a sum of two squares, then $n \stackrel{2}{\lra} I_{10}$.  Note that any integer $n \equiv 2 \pmod 8$ that is not a sum of two squares is not a sum of less than $10$ squares of odd integers. Therefore, we have $S(2)=10$.
\end{proof}

\begin{prop}
Every positive integer is a sum of at most $6$ squares not divisible by $3$, and in fact, $S(3)=6$.
\end{prop}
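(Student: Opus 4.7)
The proof splits into a lower bound and an upper bound, with the key observation that a sum of $k$ squares coprime to $3$ is automatically congruent to $k \pmod 3$, so the admissible values of $k$ for fixed $n$ are restricted to the residue class $\equiv n \pmod 3$: namely $k \in \{1,4\}$, $\{2,5\}$, or $\{3,6\}$ according as $n \equiv 1, 2, 0 \pmod 3$.

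For the lower bound $S(3) \ge 6$, take $n = 15 = 8 \cdot 1 + 7$. By Legendre's three-square theorem, $15$ is not a sum of three squares, and since $15 \equiv 0 \pmod 3$ forces $k$ to be a multiple of $3$, we get $k \ge 6$. The identity $15 = 3 \cdot 1^2 + 3 \cdot 2^2$ realizes the minimum.

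For the upper bound, the central sub-case is $n \equiv 0 \pmod 3$. I would first show that if such an $n$ is a sum of three squares then it is a sum of three coprime-to-$3$ squares. If $9 \nmid n$, then in any representation $n = x^2+y^2+z^2$ the count of terms coprime to $3$ must be $\equiv 0 \pmod 3$, and the alternative (all three divisible by $3$) would force $9 \mid n$; hence automatically $3 \nmid xyz$. If $9 \mid n$, write $n = 9m$ and apply Lemma \ref{class1} to $f = I_3$ and $p = 3$ (valid since $h(I_3)=1$ and $3 \nmid 2 \cdot df(I_3) = 2$) to obtain $r(9m, I_3) > r(m, I_3)$. The tripling map $(x,y,z)\mapsto(3x,3y,3z)$ contributes exactly $r(m,I_3)$ representations of $9m$ with all three coordinates divisible by $3$, so the strict inequality yields some $9m = u^2+v^2+w^2$ with at least one coordinate coprime to $3$; a mod-$9$ analysis, using that squares coprime to $3$ lie in $\{1,4,7\} \pmod 9$, rules out the mixed cases and forces $3 \nmid uvw$.

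When $n \equiv 0 \pmod 3$ and $n = 4^a(8b+7)$ is not a sum of three squares, I would write $n = 3 + (n-3)$ or $n = 6 + (n-6)$ and invoke the above claim on the remainder: residue analysis modulo $32$ shows that at most one of $n-3$, $n-6$ can lie in the set $T := \{4^e(8f+7)\}$, so at least one decomposition succeeds and produces a sum of six coprime-to-$3$ squares. For $n \equiv 1 \pmod 3$ (resp. $\equiv 2$), either $n$ is a square coprime to $3$ (resp. a sum of two such), or one extracts one (resp. two) coprime-to-$3$ square(s) to reduce to the handled $\equiv 0 \pmod 3$ sub-case. The main obstacle is ensuring these auxiliary extractions always succeed: a density argument covers all sufficiently large $n$ (since $T$ has density zero and there are $\gg \sqrt{n}$ admissible square values), but finitely many small $n$ and certain thin residue classes modulo small powers of $2$ will likely require explicit verification or the use of auxiliary class-number-one ternary forms beyond $I_3$ in Lemma \ref{class1}.
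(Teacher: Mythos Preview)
Your lower bound and your treatment of the case $n\equiv 0\pmod 3$ are correct, and your use of Lemma~\ref{class1} on $I_3$ is exactly the key mechanism the paper exploits. The difficulty is precisely where you flag it: the reductions for $n\equiv 1,2\pmod 3$.

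First, a small but relevant error: the set $T=\{4^a(8b+7)\}$ does \emph{not} have density zero; it has density $\tfrac{1}{6}$. More seriously, your scheme for $n\equiv 1\pmod 3$ requires an $a$ coprime to $3$ with $n-a^2$ a sum of three squares, so that the $\equiv 0$ case returns $I_3$ rather than $I_6$. When $n\equiv 0\pmod 8$ this is delicate: every odd $a$ gives $n-a^2\equiv 7\pmod 8$, every $a\equiv 2\pmod 4$ gives $n-a^2\equiv 4\pmod 8$, and one is pushed into a $2$-adic descent that is neither finite nor covered by a density statement. So the gap you identify is genuine and not merely a matter of checking small cases.

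The paper sidesteps this entirely by reversing the order of the cases. It treats $n\equiv 1\pmod 3$ \emph{first}, using Lagrange's four-square theorem: write $n=a_1^2+a_2^2+a_3^2+a_4^2$, note that the number of $a_i$ coprime to $3$ is $\equiv 1\pmod 3$, hence equals $1$ or $4$. If it equals $1$, say $3\nmid a_4$, then $a_1^2+a_2^2+a_3^2$ is already presented as a sum of three squares (no three-square obstruction to worry about), and your own Lemma~\ref{class1}/mod-$9$ argument upgrades it to three squares coprime to $3$. Thus $n\stackrel{3}{\lra}I_1$ or $I_4$. The remaining cases then follow by subtracting $1^2$: for $n\equiv 2$, apply the $\equiv 1$ result to $n-1$ to get $I_2$ or $I_5$; for $n\equiv 0$, apply the $\equiv 2$ result to $n-1$ to get $I_3$ or $I_6$. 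No step ever needs to land outside $T$.
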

\begin{proof}
Let $n$ be a positive integer. First, assume that $n\equiv 1 \pmod 3$. By Lagrange's four-square theorem, $n$ is a sum of four squares, that is, there are integers $a_1,a_2,a_3$, and $a_4$ such that $n=a_1^2+a_2^2+a_3^2+a_4^2$. If $a_1a_2a_3a_4$ is not divisible by $3$, then 
$n \stackrel{3}{\lra} I_4$. If $a_1a_2a_3a_4$ is divisible by $3$, then exactly three of $a_1,a_2,a_3$, and $a_4$ are divisible by $3$. Without loss of generality, we assume that $a_1,a_2$, and $a_3$ are divisible by $3$.  Since $n \stackrel{3}{\lra} I_1$ in the case when $a_1=a_2=a_3=0$, we assume that $a_1^2+a_2^2+a_3^2 \ne 0$.  
 By applying Lemma \ref{class1} in the case when $f=\langle1,1,1\rangle$ and $p=3$, there are integers $b_1,b_2$, and $b_3$ such that
$$
a_1^2+a_2^2+a_3^2=b_1^2+b_2^2+b_3^2 \quad \text{and} \quad (3,b_1b_2b_3)=1.
$$
In fact, Lemma \ref{class1} says that at least one of $b_1,b_2$, and $b_3$ is not divisible by $3$. However, in our case, this implies that none of $b_i$'s are divisible by $3$. Hence if $n\equiv 1 \pmod 3$, then $n \stackrel{3}{\lra} I_1$ or $n \stackrel{3}{\lra} I_4$. 

Now, assume that $n\equiv 2 \pmod 3$. If $n$ is a sum of two squares, then $n \stackrel{3}{\lra} I_2$. Otherwise, we have $n \stackrel{3}{\lra} I_5$, for $n-1\equiv 1 \pmod 3$. Finally assume that $n\equiv 0 \pmod 3$. In this case, we have $n \stackrel{3}{\lra} I_3$ or $n \stackrel{3}{\lra} I_6$. Note that if $n$ is not a sum of three squares, then $n$ is not a sum of less than or equal to $5$ squares not divisible by $3$. Therefore, we have $S(3)=6$.
\end{proof}

\section{When $n$ is divisible by  $p$}

In this and next section, we find $S(p)$ for a prime $p$ greater than $3$. 
 In this section, we find the smallest number $k$ to represent a positive integer $n$ divisible by $p$ as a sum of less than or equal to $k$ squares not divisible by $p$. 

\begin{lem}\label{primitive}
Let $p$ be an odd prime and let $n$ be a positive integer. Assume that $p$ is represented by $\langle1,k\rangle$, where $k$ is a positive integer not divisible by $p$. If an integer $n$ divisible by $p$ is represented by $\langle1,k\rangle$, then there are integers $u$ and $v$ such that
$$
n=u^2+kv^2 \quad \text{and} \quad (p,uv)=1.
$$
\end{lem}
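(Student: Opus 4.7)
My plan is to interpret $\langle 1,k\rangle$ as the norm form of the quadratic ring $\z[\sqrt{-k}]$, so that $p=a^2+kb^2$ factors as $p=(a+b\sqrt{-k})(a-b\sqrt{-k})$; note $p\nmid a$ and $p\nmid b$, since otherwise the other would also be divisible by $p$, forcing $p^2\mid p$. I shall manipulate the element $x+y\sqrt{-k}$ of norm $n$ by multiplication with $(a\pm b\sqrt{-k})^2$ to produce another element of norm $n$ whose components are coprime to $p$.

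First, a representation $n=x^2+ky^2$ with $p\mid n$ must have $p\mid x\iff p\mid y$: if for instance $p\mid x$ and $p\nmid y$, then $p\mid ky^2$ forces $p\mid y$, a contradiction. Hence either $p\nmid xy$, in which case $(u,v)=(x,y)$ finishes the argument, or $p\mid x$ and $p\mid y$. In the latter case I set $X=x/p$, $Y=y/p$, $n'=X^2+kY^2$ (so $n=p^2n'$), and form the two candidate pairs obtained by expanding $(a\pm b\sqrt{-k})^2(X+Y\sqrt{-k})$:
\begin{align*}
(u_1,v_1) &= \bigl((a^2-kb^2)X-2kabY,\ (a^2-kb^2)Y+2abX\bigr),\\
(u_2,v_2) &= \bigl((a^2-kb^2)X+2kabY,\ (a^2-kb^2)Y-2abX\bigr),
\end{align*}
each satisfying $u_i^2+kv_i^2=p^2(X^2+kY^2)=n$ by multiplicativity of the norm. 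Using $a^2\equiv -kb^2\pmod p$, one computes $u_1\equiv -2kb(bX+aY)$ and $u_2\equiv -2kb(bX-aY)\pmod p$; thus $p\mid u_i$ iff $bX\pm aY\equiv 0\pmod p$, and both candidates fail simultaneously iff $p\mid 2bX$ and $p\mid 2aY$, i.e., iff $p\mid X$ and $p\mid Y$ (using $p$ odd with $p\nmid ab$).

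If some $(u_i,v_i)$ has $p\nmid u_i$, then $p\nmid v_i$ as well (from $p\mid n$ and $p\nmid k$), and we are done. Otherwise $p\mid X$ and $p\mid Y$, so $p\mid n'$; I induct on $\ord_p(n)$, with base case $\ord_p(n)=1$ (which excludes the problematic branch since it requires $\ord_p(n)\geq 2$), and in the inductive step apply the hypothesis to $n'$ (whose $\ord_p$ is strictly smaller) to obtain $\alpha,\beta$ with $n'=\alpha^2+k\beta^2$ and $p\nmid\alpha\beta$. Repeating the $(a\pm b\sqrt{-k})^2$ construction with $(\alpha,\beta)$ in place of $(X,Y)$ then produces a good representation of $n$, because simultaneous failure would now force $p\mid\alpha$ and $p\mid\beta$, contradicting $p\nmid\alpha\beta$. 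The main delicate step is the congruence computation identifying the failure locus $p\mid u_i$ with the linear conditions $bX\pm aY\equiv 0\pmod p$; the rest is routine.
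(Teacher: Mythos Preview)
Your argument is correct. The paper does not actually prove this lemma; it simply cites the preprint \cite{oh2} (Oh--Yu, \emph{Completely $p$-primitive binary quadratic forms}). So there is no ``paper's own proof'' to compare against beyond that reference.

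Your approach---realizing $\langle 1,k\rangle$ as the norm form on $\z[\sqrt{-k}]$, multiplying by $(a\pm b\sqrt{-k})^2$ to pass from a representation of $n'=n/p^2$ to one of $n$, and using the congruence $a^2\equiv -kb^2\pmod p$ to identify the failure locus with the pair of linear conditions $bX\pm aY\equiv 0\pmod p$---is a clean, fully elementary, self-contained argument. The induction on $\ord_p(n)$ is sound: in the base case $\ord_p(n)=1$ the representation is automatically $p$-coprime; in the inductive step, either the construction applied to $(X,Y)$ already succeeds, or $p\mid X$ and $p\mid Y$, forcing $\ord_p(n')\ge 2$, so the inductive hypothesis yields a $p$-coprime representation $(\alpha,\beta)$ of $n'$ on which the construction cannot fail. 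One small cosmetic point: you could streamline by observing that the construction applied to any $(X,Y)$ with $(p,X,Y)=1$ already succeeds (whether or not $p\mid n'$), so the only genuine use of the inductive hypothesis is to supply such a pair when the initial $(X,Y)$ happens to have $p\mid\gcd(X,Y)$.
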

\begin{proof}
See \cite {oh2}.
\end{proof}

\begin{lem}\label{three1}
Let $p \equiv 1 \pmod 4$ be a prime  and let $n$ be a positive integer. If $n$ is a sum of three squares, then $n$ is a sum of $k$ squares not divisible by $p$ for some integer $k\le 4$.
\end{lem}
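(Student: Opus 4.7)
The plan is to start from a given representation $n = a_1^2 + a_2^2 + a_3^2$ and, by a case analysis on the $p$-divisibility of the $a_i$, modify it into a sum of at most four squares coprime to $p$. The modifications are carried out by Lemma \ref{primitive} applied to $\langle 1, 1\rangle$ (whenever only part of the representation consists of multiples of $p$) and by Lemma \ref{class1} applied to $\langle 1, 1, 1\rangle$ (whenever all three summands happen to be multiples of $p$).

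I would first dispose of the easy case in which every nonzero $a_i$ is coprime to $p$: here $n \stackrel{p}{\lra} I_k$ already holds for some $k \le 3$. Next, suppose that at least one nonzero $a_i$ is a multiple of $p$ but not all of them are. After relabelling, write $n = a_{i_1}^2 + \cdots + a_{i_s}^2 + M$, where the first $s$ summands are squares of nonzero integers coprime to $p$ and $M$ is the sum of the squares of the remaining nonzero multiples of $p$. Then $M$ is a positive integer divisible by $p^2$, and $M$ is already displayed as a sum of one or two squares, so $M$ is represented by $\langle 1, 1\rangle$. Because $p \equiv 1 \pmod 4$, the form $\langle 1, 1\rangle$ represents $p$, so Lemma \ref{primitive} with $k = 1$ produces integers $u, v$ with $M = u^2 + v^2$ and $(p, uv) = 1$. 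Substituting this into the representation of $n$ expresses $n$ as a sum of $s + 2 \le 4$ squares coprime to $p$.

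The final case is that every $a_i$ is divisible by $p$; then $p^2 \mid n$ and $n/p^2$ is itself a sum of three squares. I would apply Lemma \ref{class1} to $f = \langle 1, 1, 1\rangle$ (class number one, and $p \nmid 2df = 2$ since $p$ is odd), obtaining $r(n, f) > r(n/p^2, f)$. Because the inflation map $(\alpha, \beta, \gamma) \mapsto (p\alpha, p\beta, p\gamma)$ embeds $R(n/p^2, f)$ injectively into $R(n, f)$, the strict inequality forces the existence of a representation $n = c_1^2 + c_2^2 + c_3^2$ in which at least one $c_i$ is not divisible by $p$. Substituting this new representation into the preceding two cases finishes the argument with $k \le 4$.

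The only step that feels nontrivial is this last case: without the density inequality supplied by Lemma \ref{class1}, one could not rule out the pathological possibility that every three-square representation of $n$ has all coordinates in $p\z$. Once that obstacle is cleared, the argument is a clean piece of bookkeeping using Lemma \ref{primitive} to repair the divisible part of the representation.
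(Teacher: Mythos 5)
Your argument is correct and follows the same basic strategy as the paper's: a case analysis on how many of the $a_i$ are divisible by $p$, with Lemma \ref{primitive} applied to $\langle 1,1\rangle$ (which represents $p$ because $p\equiv 1\pmod 4$) used to replace the $p$-divisible part of the representation by a sum of two squares coprime to $p$; your bookkeeping $k=s+2\le 4$ matches the paper's count. The only point of divergence is the subcase in which all three $a_i$ are divisible by $p$: the paper disposes of it by iterating the earlier cases (first repairing $a_1^2+a_2^2$ via Lemma \ref{primitive}, then repairing the remaining $p$-divisible square), whereas you invoke Lemma \ref{class1} together with the injection $R(n/p^2,I_3)\hookrightarrow R(n,I_3)$ to produce a fresh three-square representation of $n$ having some coordinate coprime to $p$, and then fall back on the earlier cases. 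Both routes are valid; yours makes explicit a step the paper dismisses with ``from the above assertion,'' at the cost of importing the local-density Lemma \ref{class1} where a second application of the elementary Lemma \ref{primitive} would suffice.
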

\begin{proof}
From the assumption, there are integers $a_1,a_2$, and $a_3$ such that $n=a_1^2+a_2^2+a_3^2$. First, assume that exactly two of $a_1, a_2$, and $a_3$ are divisible by $p$. Without loss of generality, assume that both $a_1$ and $a_2$ are divisible by $p$. If $a_1=a_2=0$, then $n \stackrel{p}{\lra} I_1$.  If $a_1^2+a_2^2\ne 0$, then by Lemma \ref{primitive}, there are integers $b_1$ and $b_2$ such that
$$
a_1^2+a_2^2=b_1^2+b_2^2 \quad \text{and} \quad (p,b_1b_2)=1.
$$
Therefore, we have $n \stackrel{p}{\lra} I_3$. Next, assume that exactly one of $a_1, a_2$, and $a_3$, say $a_1$, is divisible by $p$. If $a_1=0$, then $n \stackrel{p}{\lra} I_2$. If $a_1\ne0$, then by Lemma \ref{primitive}, there are integers $c_1$ and $c_2$ such that
$$
a_1^2=c_1^2+c_2^2 \quad \text{and} \quad (p,c_1c_2)=1.
$$
Hence we have $n \stackrel{p}{\lra} I_4$. Finally, assume that  $a_i$ is divisible by $p$ for any $i=1,2,3$. In this case, from the above assertion, we may easily show that $n \stackrel{p}{\lra} I_k$ for some integer $k\le 4$. This completes the proof.
\end{proof}

\begin{prop}\label{divisible1}
Let $p \equiv 1 \pmod 4$ be a prime  and let $n$ be a positive integer.  If $n$ is divisible by $p$, then $n \stackrel{p}{\lra} I_k$ for some integer $k\le 4$.
\end{prop}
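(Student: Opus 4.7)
The plan is to split on whether $n$ is a sum of three squares. If it is, Lemma~\ref{three1} directly gives $n \stackrel{p}{\lra} I_k$ with $k \le 4$. Otherwise $n = 4^a(8b+7)$ for some integers $a, b \ge 0$, and since $p$ is odd and $p \mid n$, also $p \mid 8b+7$.

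In this second case I would first reduce to $a = 0$ by induction on $a$: for $a \ge 1$, the number $n/4 = 4^{a-1}(8b+7)$ is a positive integer still divisible by the odd prime $p$, so by the inductive hypothesis $n/4 = z_1^2 + \cdots + z_k^2$ with $k \le 4$ and $(p, z_i) = 1$ for each $i$; doubling every coordinate gives $n = (2z_1)^2 + \cdots + (2z_k)^2$ with $(p, 2z_i) = 1$ (since $p$ is odd), so $n \stackrel{p}{\lra} I_k$. Thus it suffices to treat $n \equiv 7 \pmod 8$ with $p \mid n$.

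The key step in this base case is to pass from $n$ to $2n$. Since $2n \equiv 6 \pmod 8$, $2n$ is not of the form $4^c(8d+7)$, so $2n$ is a sum of three squares, and plainly $p \mid 2n$. Applying Lemma~\ref{three1} to $2n$ yields a representation $2n = w_1^2 + \cdots + w_j^2$ with $j \le 4$ and every $w_i$ coprime to $p$. A parity analysis modulo $8$ (using $w_i^2 \in \{0,1,4\} \pmod 8$ with $w_i \ne 0$, together with $2n \equiv 6 \pmod 8$) rules out $j \le 2$ and forces, after reordering, either $j = 3$ with $w_1, w_2$ odd and $w_3 \equiv 2 \pmod 4$, or $j = 4$ with $w_1, w_2$ odd, $w_3 \equiv 2 \pmod 4$, and $w_4 \equiv 0 \pmod 4$. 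In either case the $w_i$'s can be grouped into pairs of equal parity, and applying the identity $(a^2 + b^2)/2 = ((a+b)/2)^2 + ((a-b)/2)^2$ to each same-parity pair (together with $w_3^2/2 = (w_3/2)^2 + (w_3/2)^2$ in the $j = 3$ case) converts the representation of $2n$ into an expression
\[
n = u_1^2 + u_2^2 + u_3^2 + u_4^2
\]
with integer $u_i$'s.

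The main obstacle is ensuring that every $u_i$ is coprime to $p$. The assumption that $n$ is not a sum of three squares forces every $u_i$ to be nonzero (otherwise $n$ would be a sum of three squares, contradicting Case~2), so the only remaining concern is that some $u_i$ is a nonzero multiple of $p$, which happens exactly when two $w$'s in the same pair satisfy $w_r \equiv \pm w_s \pmod p$. One handles this by exploiting the freedom among the 3-square representations of $2n$: if the representation supplied by Lemma~\ref{three1} yields a bad $u_i$, one invokes Lemma~\ref{class1} (applied to $f = I_3$) to produce alternative representations of $2n$ with different residues modulo $p$, and then selects one whose paired $w$'s avoid the forbidden congruences. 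This yields $n \stackrel{p}{\lra} I_4$ and completes the proof.
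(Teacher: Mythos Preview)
The reduction to $n\equiv 7\pmod 8$ via Lemma~\ref{three1} and the induction on the power of $4$ are fine, and the parity analysis of a representation $2n=w_1^2+\cdots+w_j^2$ (with $2n\equiv 6\pmod 8$) leading to the two shapes you describe is correct. The difficulty is exactly where you flag it, and your proposed fix does not work.

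What Lemma~\ref{class1} actually gives is $r(p^2m,I_3)>r(m,I_3)$; in the paper this is used only to say that if \emph{every} coordinate in a ternary representation is divisible by $p$, then there is another representation in which not all coordinates are divisible by $p$. It says nothing about avoiding a congruence of the type $w_1\equiv\pm w_2\pmod p$ between two \emph{specific} coordinates, which is what you need so that $(w_1\pm w_2)/2$ stay coprime to $p$. Moreover, Lemma~\ref{class1} only applies when $p^2\mid 2n$, i.e.\ when $\ord_p(n)\ge 2$; in the crucial case $\ord_p(n)=1$ it gives you no extra representations at all. And in your $j=4$ case the representation of $2n$ is by $I_4$, not by a ternary form, so the lemma is not even applicable. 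The sentence ``one invokes Lemma~\ref{class1} \dots\ and then selects one whose paired $w$'s avoid the forbidden congruences'' is therefore an assertion, not an argument: you have not shown that such a representation exists, and there is no mechanism in the cited lemma that produces it.

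For comparison, the paper avoids this obstacle by a different route in the $n\equiv 7\pmod 8$ case: it represents $n$ by the class-number-one form $\langle 1,1,5\rangle$, writes $n=x^2+y^2+z^2+(2z)^2$, and then handles each pattern of divisibility of $x,y,z$ by $p$ separately, using Lemma~\ref{primitive} for $\langle 1,1\rangle$ and $\langle 1,5\rangle$ together with explicit algebraic identities (e.g.\ the two decompositions $5z^2=(2c\pm d)^2+(c\mp 2d)^2$) to force all four summands to be coprime to $p$. That case analysis is what replaces the unproven ``selection'' step in your argument.
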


\begin{proof} Without loss of generality, we may assume that $\ord_q(n)\le 1$ for any prime $q\ne p$. 
By Lemma \ref{three1}, we may assume that $n\equiv 7 \pmod 8$. Since the class number of $\langle1,1,5\rangle$ is one and every positive integer congruent to $7$ modulo $8$ is represented by $\langle1,1,5\rangle$ over $\z_p$ for any prime $p$, there are integers $x,y$, and $z$ such that $n=x^2+y^2+5z^2$ by 102.5 of \cite{om}. If $xyz$ is not divisible by $p$, then $n=x^2+y^2+z^2+(2z)^2$ and $n \stackrel{p}{\lra} I_4$. Assume that at least two of $x,y$, and $z$ are divisible by $p$. Then, both $x$ and $y$ are divisible by $p$. If $x^2+y^2 \ne 0$, then there are integers $a$ and $b$  not divisible by $p$ such that $x^2+y^2=a^2+b^2$. If $z$ is not divisible by $p$, then $p=5$. Since $5z^2=(2z)^2+z^2$, we have $n \stackrel{p}{\lra} I_2$ or $n \stackrel{p}{\lra} I_4$. Assume that $z$ is a non-zero integer divisible by $p$. Then there are integers $c$ and $d$  not divisible by $p$ such that $z^2=c^2+d^2$. Hence we have 
$$
5z^2=(2c+d)^2+(c-2d)^2=(2c-d)^2+(c+2d)^2.
$$
Now, one may easily check that either $(2c+d)(c-2d)$ or $(2c-d)(c+2d)$ is not divisible by $p$.  
Therefore, we have  $n \stackrel{p}{\lra} I_2$ or $n \stackrel{p}{\lra} I_4$. 

Now, assume that $x$ is divisible by $p$ and $yz$ is not divisible by $p$. Since $y^2+5z^2\equiv 0 \pmod p$, we have $\left(\frac{-5}{p}\right)=\left(\frac{5}{p}\right)=1$. Let $x=p^tx'$ with $(p,x')=1$. Since the class number of $\langle1,5\rangle$ is one, both $p$ and $p^{2t}$ is represented by $\langle1,5\rangle$. Hence by Lemma \ref{primitive}, there are integers $u$ and $v$ such that $p^{2t}=u^2+5v^2$ and $(p,uv)=1$. Then we have
$$
\begin{array}{ll}
n=x^2+y^2+5z^2&=p^{2t}{x'}^2+y^2+5z^2\\
              &=(u^2+5v^2) {x'}^2+y^2+5z^2 \\     
              &= (vx'+2z)^2+(2vx'-z)^2+u^2{x'}^2+y^2\\
              &= (vx'-2z)^2+(2vx'+z)^2+u^2{x'}^2+y^2,
\end{array}
$$
where $uvx'yz$ is not divisible by $p$. Note that either $(vx'+2z)(2vx'-z)$ or  $(vx'-2z)(2vx'+z)$ is not divisible by $p$. Hence $n \stackrel{p}{\lra} I_k$ for some integer $3\le k\le4$. The proof of the case when $y$ is divisible by $p$ and $xz$ is not divisible by $p$ is quite similar to this. If $z$ is divisible by $p$ and $xy$ is not divisible by $p$, then one may easily show that $n \stackrel{p}{\lra} I_2$ or $n \stackrel{p}{\lra} I_4$ by the similar reasoning given above.
\end{proof}

Now, we consider the case when $p \equiv 3 \pmod 4$ and $n$ is divisible by $p$. To deal with this case, we need some results from the theory of modular forms. For general theory of modular forms and some relation between representations of quadratic forms and modular forms, see \cite{ono} and \cite{wp}.

For a positive integer $N$ and a positive rational number $k$ such that $2k \in \z$, let $S_{k}(N,\chi)$ be the space of cusp forms of weight $k$ with character $\chi$ for the congruence group $\Gamma_0(N)$.

\begin{lem}\label{ramanujan}
Let $f=\langle 1,1,10\rangle$ be Ramanujan's ternary quadratic form and let $n$ be a positive integer. For any prime $p \ne 2,3,5$, and $17$, we have
$$
r(p^2n,f)-r(n,f)>0,
$$
provided that $p^2n$ is represented by $f$.
\end{lem}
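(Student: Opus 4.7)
\medskip
\noindent\textbf{Proof proposal.} The plan is to write $r(n,f)=r(n,\gen(f))+E(n)$, where
\[
\theta_f(\tau)-\theta_{\gen(f)}(\tau)=\sum_{n\ge 1}E(n)q^n
\]
is a cusp form of weight $3/2$, and then control the genus main term through Lemma \ref{t-square} and the defect $E$ through the Shimura correspondence. From the decomposition,
\[
r(p^2n,f)-r(n,f)=r(n,\gen(f))\bigl(\rho_p(n)-1\bigr)+\bigl(E(p^2n)-E(n)\bigr),
\]
where $\rho_p(n):=r(p^2n,\gen(f))/r(n,\gen(f))$. For $p\notin\{2,5\}$, since $df=10$, Lemma \ref{t-square} applies, and a short case analysis on the Legendre symbol $\bigl(\tfrac{-n\cdot df}{p}\bigr)$ yields $\rho_p(n)\ge p$ whenever $p^2n$ is represented by $\gen(f)$. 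Hence the main term grows at least linearly in $p$.

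For the defect $E$, I would exploit that $f=\langle 1,1,10\rangle$ has class number $2$; letting $f'$ denote the other class in $\gen(f)$, one has $E(n)=\tfrac12\bigl(r(n,f)-r(n,f')\bigr)$ and in particular the trivial bound $|E(n)|\le r(n,\gen(f))$. Applying the Shimura correspondence, the relevant cusp form maps into $S_2(\Gamma_0(20))$, which is one-dimensional and spanned by the newform $F=\eta(2\tau)^2\eta(10\tau)^2$ attached to the conductor-$20$ elliptic curve. Its Hecke eigenvalues are
\[
a_3=-2,\quad a_7=2,\quad a_{11}=0,\quad a_{13}=2,\quad a_{17}=-6,\quad a_{19}=-4,\quad\ldots,
\]
with the Hasse-Weil bound $|a_p|\le 2\sqrt p$ in general. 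The Hecke recursion for $T_{p^2}$ in weight $3/2$ then gives, for $p\nmid 20n$,
\[
E(p^2n)=\bigl(a_p-\epsilon_p(n)\bigr)E(n),\qquad \epsilon_p(n)\in\{\pm 1\}
\]
(an explicit product of Legendre symbols of $n$ mod $p$), with a slightly more involved variant picking up a $p\cdot E(n/p^2)$ term when $p\mid n$.

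Combining the two ingredients,
\[
r(p^2n,f)-r(n,f)\ge r(n,\gen(f))\Bigl(\rho_p(n)-1-\bigl|a_p-\epsilon_p(n)-1\bigr|\Bigr),
\]
which is strictly positive whenever $\rho_p(n)-1>|a_p|+2$. For $p\ge 23$, the inequality $\rho_p\ge p$ together with the Hasse-Weil bound make the right-hand side positive; for the intermediate primes $p\in\{7,11,13,19\}$, the explicit eigenvalues settle the matter directly. The exceptional primes $p=3$ and $p=17$ fall outside this argument precisely because $|a_3|=2$ and $|a_{17}|=6$ are large enough compared to the margin $\rho_p-1$ that the bound degenerates (for $p=3$ the inequality fails on its face, while for $p=17$ the combination with the $p\mid n$ recursion is where the analysis breaks down). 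The main obstacle will be handling $p\mid n$: here $\rho_p$ can be slightly smaller than $p$, and the recursion brings in an extra $E(n/p^2)$ contribution that must be absorbed by induction on $\ord_p(n)$, with especial care taken at the small primes $\{7,11,13\}$ where the margin is thin and a finite direct check is required.
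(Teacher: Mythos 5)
Your proposal follows essentially the same route as the paper: decompose $\theta_f$ into its genus (Eisenstein) part plus the weight-$3/2$ cusp form $\theta_f-\theta_{\gen(f)}$, note that this cusp form is a Hecke eigenform whose Shimura lift is the newform $\eta(2z)^2\eta(10z)^2\in S_2(\Gamma_0(20))$, control the main term by Lemma \ref{t-square} and the error term by Deligne's bound $|a_p|\le 2\sqrt p$ together with the explicit eigenvalues at small primes, and observe that $p=3,17$ fail precisely because $a_3,a_{17}$ are too large for the margin. Two points in your write-up need repair, though both are fixable within your framework. First, your ``trivial bound'' $|E(n)|\le r(n,\gen(f))$ is false: the two classes have isometry groups of different orders (the paper's weighting is $r(n,\gen(f))=\tfrac13\bigl(r(n,f)+2r(n,f')\bigr)$), so $E(n)=\tfrac23\bigl(r(n,f)-r(n,f')\bigr)$ and the sharp trivial bound is $|E(n)|\le 2\,r(n,\gen(f))$, attained when $r(n,f')=0$; your threshold inequality must be adjusted accordingly (the numbers still work out, including at $p=7$, since $\rho_p\ge p$). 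Second, the term $p\cdot E(n/p^2)$ when $p^2\mid n$ is the one place where you defer to an unspecified ``induction on $\ord_p(n)$''; the paper avoids this entirely by a cleaner device: taking the linear combination $2\bigl(\theta_f-\theta_{f'}\bigr)+\bigl(\theta_f+2\theta_{f'}\bigr)=3\theta_f$ of the Hecke relation and the genus relation, the $r(n/p^2,f)$ contributions cancel exactly and the surviving term is $+6p\,r(n/p^2,f')\ge 0$, so no induction is needed. Also, your worry that $\rho_p(n)$ ``can be slightly smaller than $p$'' when $p\mid n$ is unfounded --- a case check on Lemma \ref{t-square} gives $\rho_p(n)\ge p$ for every value of $\ord_p(n)$ and of the Legendre symbol.
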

\begin{proof}
Note that $h(f)=2$ and
$$
\gen(f)/\sim=\left\{f, f'=\langle2\rangle \perp \begin{pmatrix}2&1 \\ 1&3\end{pmatrix} \right\}.
$$
We let 
$$
\phi(z)=\displaystyle\sum_{n=1}^{\infty}a(n)q^n=\displaystyle\frac14\sum_{n=1}^{\infty}(r(n,f)-r(n,f'))q^n=\displaystyle q-q^3-q^7-q^9+2q^{13}+\cdots,
$$
where $q=e^{2\pi i z}$. Then it is known that $\phi(z)\in S_{\frac32}\left(40,\left(\frac{10}{.}\right)\right)$ is the weight $\frac32$ cusp form. It is also known  (see, for example, \cite{os}) that the Shimura lift of $\phi(z)$ is a cusp form of  weight $2$ 
$$
\begin{array}{ll}
\Phi(z)&\!\!\!\!=\eta^{2}(2z)\eta^{2}(10z)=\displaystyle\sum_{n=1}^{\infty}A(n)q^n\\ [1em]
                              &\!\!\!\!=q-2q^3-q^5+2q^7+q^9+2q^{13}+2q^{15}-6q^{17}-4q^{19}-4q^{21}+6q^{23}+\cdots.
\end{array}
$$
Here $\eta(z)=q^{\frac{1}{24}}\prod_{n=1}^{\infty}(1-q^n)$ is the Dedekind's eta-function. Since the dimension of the space $S_{\frac32}\left(40,\left(\frac{10}{.}\right)\right)$ is one, $\phi(z)$ is an eigenform of all Hecke operators $T(p^2)$. 
Hence for any prime $p$, there is a complex number $\alpha(p)$ such that
\begin{equation}\label{hecke}
\alpha(p)a(n)=a(p^2n)+\left(\frac{-10n}{p}\right)a(n)+\left(\frac{10}{p}\right)^2\cdot p\cdot a(n/p^2),
\end{equation}
for any positive integer $n$. Here $a(n/p^2)=0$ if $n$ is not divisible by $p^2$. Since $\Phi(z)\in S_2(20)$ is the newform and the Shimura lifts commute with the Hecke operators of integral and half-integral weight, we have $\alpha(p)=A(p)$ for any prime $p$. By Deligne's bound on Hecke eigenvalues, we have $\vert A(n) \vert \le \tau(n)n^{\frac12}$, where $\tau(n)$ is the number of positive divisors of $n$. Hence we have $\vert \alpha(p) \vert=\vert A(p) \vert \le 2\sqrt{p}$ for any prime $p$. 

Let $p$ be a prime relatively prime to $10$ and let $\ord_p(n)=\lambda_p$. From the assumption given above, we may assume that $n$ is represented by $f$. Then, by Equation \eqref{hecke}, we have
$$
\begin{array}{ll}
r(p^2n,f)-r(p^2n,f')&=\left(\alpha(p)-\left(\frac{-10n}{p}\right)\right)(r(n,f)-r(n,f'))\\
                    &\hspace{0.4cm}-\, p\displaystyle \cdot\left(r\left(\frac n{p^2},f\right)-r\left(\frac n{p^2},f'\right)\right).
\end{array}
$$
By Lemma \ref{t-square}, we also have
$$
\begin{array}{ll}
r(p^2n,f)+2r(p^2n,f')&=\displaystyle \left( \frac{ \displaystyle p^{[\frac{\lambda_p}{2}] +2}-1-\left(\frac{-np^{-2[\frac{\lambda_p}{2}]}\cdot df}{p}\right)(p^{[\frac{\lambda_p}{2}] +1}-1)}{ \displaystyle p^{[\frac{\lambda_p}{2}] +1}-1-\left(\frac{-np^{-2[\frac{\lambda_p}{2}]}\cdot df}{p}\right)(p^{[\frac{\lambda_p}{2}]}-1)} \right)\\
                    &\hspace{0.4cm}\times (r(n,f)+2r(n,f'))\\[0.3cm]
                    &\hspace{-3cm}=\displaystyle \left( \frac{ \displaystyle p^{[\frac{\lambda_p}{2}] +1}(p-2)+2p-1-\left(\frac{-np^{-2[\frac{\lambda_p}{2}]}\cdot df}{p}\right)(p^{[\frac{\lambda_p}{2}]}(p-2)+2p-1)}{ \displaystyle p^{[\frac{\lambda_p}{2}] +1}-1-\left(\frac{-np^{-2[\frac{\lambda_p}{2}]}\cdot df}{p}\right)(p^{[\frac{\lambda_p}{2}]}-1)} \right)\\ [3em]
                    &\hspace{-2.6cm}\times \displaystyle (r(n,f)+2r(n,f'))+2p\cdot\left(r\left(\frac n{p^2},f\right)+2r\left(\frac n{p^2},f'\right)\right).
                    
\end{array}                    
$$
By combining two equalities given above, we have
$$
\begin{array}{ll}
3(r(p^2n,f)-r(n,f))&\!\!\!\ge\left(p-5+2\alpha(p)-2\left(\frac{-10n}{p}\right)\right)r(n,f)\\
                   &\!\!+\left(2p-4-2\alpha(p)+2\left(\frac{-10n}{p}\right)\right)r(n,f')+6p\cdot r\displaystyle \left(\frac n{p^2},f'\right).
\end{array}
$$
If $p\ne 3,17$, then $p-5+2\alpha(p)-2\left(\frac{-10n}{p}\right)>0$ and 
$$2
p-4-2\alpha(p)+2\left(\frac{-10n}{p}\right)>0,
$$ 
for any prime $p$. Therefore if $p\ne 2,3,5$, and $17$, we have  $r(p^2n,f)-r(n,f)>0$.
This completes the proof. \end{proof}

The following proposition is mentioned in Remark 3.2 of \cite{oh1} without proof. Here, we provide a simple proof for those who are unfamiliar with the method developed in \cite{oh1}.  

\begin{prop} \label{ramatec} For any positive integer $n$ such that $n\equiv 5\pmod 6$, the diophantine equation $n=x^2+y^2+10z^2$ has always an integer solution. 
\end{prop}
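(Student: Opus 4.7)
My plan is to combine a local representability argument for the genus of $f=\langle 1,1,10\rangle$ with a vanishing property of the cusp form $\phi$ introduced in Lemma~\ref{ramanujan}, which forces the two isometry classes in $\gen(f)$ to represent $n$ with equal multiplicity whenever $n\equiv 2\pmod 3$.

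\textbf{Step 1 (local representability).} First I would verify that every $n\equiv 5\pmod 6$ is represented by $\gen(f)$. Since $df=10$, only the primes $p=2$ and $p=5$ can obstruct. At $p=5$, $\langle 1,1\rangle$ is universal over $\z_5$ because $-1$ is a square modulo $5$, so $f\cong\langle 1,1\rangle\perp\langle 10\rangle$ is universal over $\z_5$. At $p=2$, a short mod-$8$ analysis shows that $x^2+y^2\pmod 8\in\{0,1,2,4,5\}$ and $10z^2\pmod 8\in\{0,2\}$ jointly cover all residues mod $8$, so every odd $n$ is locally represented. Hence $r(n,\gen(f))>0$ whenever $n\equiv 5\pmod 6$.

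\textbf{Step 2 (vanishing of $a(n)$ and conclusion).} Write $f'=\langle 2\rangle\perp\begin{pmatrix}2&1\\1&3\end{pmatrix}$ for the other class in $\gen(f)$, and let $a(n)$ be the Fourier coefficient of $\phi\in S_{3/2}\!\left(40,\left(\frac{10}{\cdot}\right)\right)$. I claim $a(n)=0$ for every $n\equiv 2\pmod 3$. Since the ambient space is one-dimensional (as noted in the proof of Lemma~\ref{ramanujan}), $\phi$ is a Hecke eigenform of every $T(p^2)$; by the Shimura correspondence, $a(t)^2$ for squarefree $t$ is proportional (via Waldspurger's formula) to the central value $L(\Phi\otimes\chi_t,1)$ of the newform $\Phi(z)=\eta^2(2z)\eta^2(10z)\in S_2(\Gamma_0(20))$. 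A local root-number computation at $p=3$ then shows that the sign of the functional equation of $L(\Phi\otimes\chi_t,s)$ is $-1$ for every squarefree $t\equiv 2\pmod 3$, forcing $a(t)=0$; the standard Shimura-correspondence formula expressing $a(tm^2)$ in terms of $a(t)$ and the Hecke eigenvalues of $\Phi$ then extends the vanishing to the full progression. With $o(f)=16$ and $o(f')=4$, one has $r(n,\gen(f))=(r(n,f)+4r(n,f'))/5$, so for $n\equiv 5\pmod 6$ Step~2 gives $r(n,f)=r(n,f')$ and hence $r(n,\gen(f))=r(n,f)$; by Step~1 this is positive.

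\textbf{Main obstacle.} The crux is Step~2: establishing the vanishing of $a(n)$ on $n\equiv 2\pmod 3$. The $L$-function route above is conceptually clean but requires some care with local $\epsilon$-factors at the prime $3$. A more elementary alternative---likely what the paper has in mind by its ``simple proof''---is to exhibit $\phi$ explicitly as a concrete combination of theta series attached to ternary lattices twisted by suitable characters, which is feasible because the ambient cusp form space is one-dimensional; the vanishing on $n\equiv 2\pmod 3$ can then be read off directly from the structure of the theta components.
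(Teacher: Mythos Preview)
Your strategy is coherent and the key assertion---that $a(n)=0$ whenever $n\equiv 2\pmod 3$, hence $r(n,f)=r(n,f')$---is in fact true, so the argument would close once the root-number computation is carried out. However, this is \emph{not} the route the paper takes, and the difference is substantial.

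The paper's proof is entirely elementary and never appeals to Waldspurger, $L$-functions, or even the vanishing of $a(n)$. Instead it works directly with representations: starting from $n=2a^2+2b^2+2bc+3c^2$ (a representation by the other class $f'$), it lists the possible residues $(a,b,c)\bmod 3$ compatible with $n\equiv 2\pmod 3$, and for each residue class produces an explicit integral linear substitution transforming the $f'$-representation into a representation $n=X^2+Y^2+10Z^2$. The one residue class that resists this direct treatment, $(a,b,c)\equiv(1,0,0)\pmod 3$, is handled by Gauss composition in the class group of discriminant $-20$: since $9$ is primitively represented by the identity class $x^2+5y^2$, any value of $2b^2+2bc+3c^2$ admits a $3$-primitive representation, which throws the problem back into an already-solved case.

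So the comparison is: your approach is conceptual and would generalize readily to other genera once the analytic machinery is in place, but as written it defers the decisive step (the local $\varepsilon$-factor at $3$) to a computation you acknowledge not having done. The paper's approach is bespoke and computational, but it is self-contained and requires nothing beyond explicit linear algebra over $\z$ and the composition law for binary forms. Two minor points on your write-up: your mod-$8$ check in Step~1 gives solvability mod $8$, not yet over $\z_2$ (a Hensel lift is needed, though routine since $n$ is odd); and note that $r(n,f)=r(n,f')$ is actually a stronger statement than what the paper proves or needs---the paper only shows $r(n,f)>0$.
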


\begin{proof} One may easily show that every integer $n$ such that $n\equiv 5\pmod 6$ is represented by the genus of Ramanujan's ternary quadratic form $f=\langle 1,1,10\rangle$. Hence we may assume that $n$ is represented by the other ternary quadratic form in the genus of Ramanujan's ternary quadratic form, that is, there are integers $a,b$, and $c$ such that $n=2a^2+2b^2+2bc+3c^2$. Then, by a direct computation, we have
$$
\begin{array}{ll}
&(a,b,c)\equiv (0,\pm1,0), (\pm1,0,0), (1,0,\pm1), \\ [0.5em]
&\hspace{3.15cm}(1,\pm1,\mp1), (-1,0,\pm1) \ \text{or} \ (-1,\pm1,\mp1) \pmod 3.
\end{array}
$$
By changing signs of $a,b$, and $c$, if necessary, we may assume that  
$$
(a,b,c)\equiv (0,1,0), (1,0,0), (1,0,1) \ \text{or} \ (1,1,-1) \pmod 3.
$$
First, assume that $(a,b,c)\equiv(1,1,-1) \pmod 3$. Then there are integers $b'$ and $c'$ such that $b-a=3b'$ and $c-2a=3c'$. Therefore we have
$$
\begin{array}{rl}
n\!\!\!&=2a^2+2(3b'+a)^2+2(3b'+a)(3c'+2a)+3(3c'+2a)^2\\
 &=20a^2+18{b'}^2+27{c'}^2+24ab'+42ac'+18b'c'\\
 &=(a+3b'-c')^2+(3a+3b'+4c')^2+10(a+c')^2,
\end{array}
$$
which implies that $n$ is represented by $\langle 1,1,10 \rangle$. Similarly, one may easily check that  $n$ is represented by $\langle 1,1,10 \rangle$ in the cases when $(a,b,c)\equiv (0,1,0) \pmod 3$ or $(a,b,c)\equiv (1,0,1) \pmod 3$.

   Finally, assume that $(a,b,c)\equiv (1,0,0) \pmod 3$. Let $\mathfrak G_{-20}$ be the set of all proper classes of primitive binary quadratic forms with discriminant $-20$. Then it is well known that $\mathfrak G_{-20}$ forms an abelian group with the composition law (for details, see \cite{ca}). In fact, $\mathfrak G_{-20}=\{[x^2+5y^2], [2x^2+2xy+3y^2]\}$ and $[x^2+5y^2]$ is the identity class. Since $3^2$ is primitively represented by the identity class $[x^2+5y^2]$, every integer that is represented by $2x^2+2xy+3y^2$ is $3$-primitively represented by it by Theorem 4.1 of \cite{oh2}. This implies that there are always integers $d,e$ such that 
$$
2b^2+2bc+3c^2=2{d}^2+2de+3{e}^2\quad \text{and} \quad (3,d,e)=1,
$$
 unless $b=c=0$. This implies that $(a,d,e)\equiv (1,\pm1,\mp1)$ or $(1,0,\pm1) \pmod 3$. Therefore $n$ is represented by $\langle 1,1,10 \rangle$ from the above argument.  Note that any integer of the form $2a^2$ is represented by Ramanujan's ternary quadratic form $\langle 1,1,10\rangle$. This completes the proof. 
\end{proof}

\begin{lem} \label{tec}  Let $p\equiv 3 \pmod 4$ be a prime and let $n$ and $a$ be  positive integers such that 
\begin{itemize} 
\item [(i)] the diagonal ternary quadratic form $\langle 1,1,a\rangle$ has class number $1$;
\item [(ii)] the integer $a$ is either a square or a sum of $2$ squares not divisible by $p$;
\item [(iii)] $\left(\frac ap\right)=1$ and $n$ is divisible by $p$;
\item [(iv)] $n$ is represented by $\langle 1,1,a\rangle$.
\end{itemize}
Then, $n \stackrel{p}{\lra} I_3$ if $a$ is a square, and $n \stackrel{p}{\lra} I_4$ if $a$ is a sum of $2$ squares not divisible by $p$.
\end{lem}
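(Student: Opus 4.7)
The plan is to reduce the entire statement to one sub-claim: there exists an integer representation $n = x^2 + y^2 + az^2$ in which $p$ divides none of $x, y, z$. Granting this, the conclusions are immediate from hypothesis (ii). If $a = k^2$, then (iii) forces $p \nmid a$ and hence $p \nmid k$, so $n = x^2 + y^2 + (kz)^2$ exhibits $n \stackrel{p}{\lra} I_3$. If $a = b^2 + c^2$ with $p \nmid bc$, then $n = x^2 + y^2 + (bz)^2 + (cz)^2$ exhibits $n \stackrel{p}{\lra} I_4$. So the only real content lies in the sub-claim.

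First I would record a local dichotomy at $p$: in any integer representation $n = x^2 + y^2 + az^2$ with $p \mid n$, either $p$ divides all of $x, y, z$ or $p$ divides none of them. Reducing the relation mod $p$ gives $x^2 + y^2 + az^2 \equiv 0 \pmod p$, and I split on whether $p \mid z$. If $p \mid z$, then $x^2 + y^2 \equiv 0 \pmod p$, and since $p \equiv 3 \pmod 4$ makes $-1$ a non-residue, this forces $p \mid x$ and $p \mid y$. If $p \nmid z$, then $\left(\frac{-a}{p}\right) = \left(\frac{-1}{p}\right)\left(\frac{a}{p}\right) = -1$, and a short case analysis on which of $x, y$ are divisible by $p$ rules out the intermediate possibilities: having exactly one of them divisible by $p$ would exhibit $-a$ as a square mod $p$, and having exactly two of them divisible by $p$ would force $p \mid a z^2$, contradicting $p \nmid a$ and $p \nmid z$.

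The remaining, and main, step is to exclude the possibility that \emph{every} representation of $n$ by $f = \langle 1, 1, a \rangle$ has all three coordinates divisible by $p$. This is what Lemma \ref{class1} is designed for. If that bad case occurred, then $p^2 \mid n$ and division by $p$ would induce a bijection between $R(n, f)$ and $R(n/p^2, f)$, yielding $r(n, f) = r(n/p^2, f)$. On the other hand, all hypotheses of Lemma \ref{class1} are satisfied for $m = n/p^2$: by (i) we have $h(f) = 1$, the prime $p$ is odd, and (iii) gives $p \nmid a = df$; so the lemma produces $r(p^2 m, f) > r(m, f)$, contradicting the equality just derived. The dichotomy then upgrades the surviving representation from having some coordinate coprime to $p$ to having all three coordinates coprime to $p$, which is exactly the sub-claim. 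I expect the main obstacle to be nothing more than careful bookkeeping in the dichotomy and in verifying the hypotheses of Lemma \ref{class1}; no further machinery beyond what the section has already developed seems to be needed.
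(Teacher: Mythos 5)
Your proposal is correct and takes essentially the same route as the paper: establish the all-or-nothing dichotomy modulo $p$ from $p\equiv 3\pmod 4$ and $\left(\frac{a}{p}\right)=1$, rule out the case where every representation has all coordinates divisible by $p$ via the counting consequence of Lemma \ref{class1}, and then expand $a$ as a square or a sum of two squares prime to $p$. Your write-up is in fact slightly more explicit than the paper's about the bijection $r(n,f)=r(n/p^2,f)$ and the verification that $p\nmid 2df$.
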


\begin{proof} By condition (iv), there are integers $x_1,x_2$, and $x_3$ such that $n=x_1^2+x_2^2+ax_3^2$. 
Since the proofs are quite similar to each other, we assume that  $a$ is a sum of $2$ squares not divisible by $p$.
Then, there are integers $a_1$ and $a_2$ such that $a=a_1^2+a_2^2$ and $(p,a_1a_2)=1$. Hence we have
$$
n=x_1^2+x_2^2+(a_1x_3)^2+(a_2x_3)^2.
$$
If one of $x_i$'s is divisible by $p$, then by condition (iii), all of the $x_i$'s are divisible by $p$. Therefore, if $\ord_p(n)=1$, then $n \stackrel{p}{\lra} I_4$. If $\ord_p(n) \ge 2$, then we may take $x_i$'s such that at least one of them is not divisible by $p$ by Lemma \ref{class1}. This implies that all of the $x_i$'s are not divisible by $p$. 
This completes the proof.  \end{proof}

\begin{rmk} \label{remark1} Note that  the class number of Ramanujan's ternary quadratic form $\langle 1,1,10\rangle$ is two. 
However, we may apply the above lemma to the case when $a=10$ and $p>3$ by using Lemma \ref{ramanujan} instead of Lemma \ref{class1}. In this case, it is not easy to check whether condition (iv) of Lemma \ref{tec} holds or not without Generalized Riemann Hypothesis(GRH) (see \cite{os}).    
\end{rmk}

\begin{prop}\label{divisible2}
Let $p\equiv 3\pmod 4$ be a prime greater than $3$ and let $n$ be a positive integer divisible by $p$. Then $n \stackrel{p}{\lra} I_k$ for some integer $k=3$ or $4$. 
\end{prop}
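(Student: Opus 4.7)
The plan is to reduce to the case $n\equiv 7\pmod 8$ with $p\mid n$, and then to apply Lemma \ref{tec} with a choice of auxiliary integer $a$ depending on the residue of $p$ modulo $8$ and on $\left(\frac{5}{p}\right)$.

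First, if $n$ is a sum of three squares, say $n=a_1^2+a_2^2+a_3^2$, then because $p\equiv 3\pmod 4$ makes $-1$ a nonresidue modulo $p$, the congruence $a_i^2+a_j^2\equiv 0\pmod p$ forces $p\mid a_i$ and $p\mid a_j$; hence either all three or none of the $a_i$ are divisible by $p$. In the ``all'' case $\ord_p(n)\ge 2$, and Lemma \ref{class1} applied to $\langle 1,1,1\rangle$ (which has class number $1$) produces another representation at least one of whose components is coprime to $p$---hence, by the dichotomy, all of them are. Thus $n\stackrel{p}{\lra}I_3$. Otherwise Legendre's theorem gives $n=4^t(8s+7)$; writing $n=4^tm$ with $m\equiv 7\pmod 8$ and noting that multiplication of each component by $2^t$ preserves coprimality with the odd prime $p$, we may assume $n\equiv 7\pmod 8$.

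We then split into three subcases. If $p\equiv 7\pmod 8$, take $a=2=1^2+1^2$; since $\left(\frac{2}{p}\right)=1$, the form $\langle 1,1,2\rangle$ has class number $1$ and represents every odd integer, so Lemma \ref{tec} yields $n\stackrel{p}{\lra}I_4$. If $p\equiv 3\pmod 8$ and $\left(\frac{5}{p}\right)=1$, take $a=5=1^2+2^2$; $\langle 1,1,5\rangle$ has class number $1$ and represents every positive integer $\equiv 7\pmod 8$, so Lemma \ref{tec} again applies. In the remaining case $p\equiv 3\pmod 8$ and $\left(\frac{5}{p}\right)=-1$, we have $\left(\frac{10}{p}\right)=1$; take $a=10=1^2+3^2$ and apply Lemma \ref{tec} using Lemma \ref{ramanujan} in place of Lemma \ref{class1}, as described in Remark \ref{remark1}. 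The hypotheses $p\ne 2,3,5,17$ of Lemma \ref{ramanujan} hold automatically in this case, since $p>3$ and $17\equiv 1\pmod 8$.

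The main obstacle is condition (iv) of Lemma \ref{tec} in this last subcase: verifying that $\langle 1,1,10\rangle$ itself (and not just its genus) represents $n$. A local analysis at $2$, $3$, and $5$ shows that every $n\equiv 7\pmod 8$ lies in the genus of $\langle 1,1,10\rangle$. When $n\equiv 5\pmod 6$, Proposition \ref{ramatec} supplies a representation by $\langle 1,1,10\rangle$ directly. For the remaining residues $n\equiv 1,3\pmod 6$, the plan is to extend the argument of Proposition \ref{ramatec}: starting from a representation of $n$ by the companion form $f'=\langle 2\rangle\perp\begin{pmatrix}2 & 1\\1 & 3\end{pmatrix}$ in the genus, a case analysis on the components modulo $3$ together with the explicit substitutions used in Proposition \ref{ramatec}'s proof converts this into a representation by $\langle 1,1,10\rangle$. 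This last step is the crux of the argument, because the complete list of integers in the genus of Ramanujan's form but not represented by it is known only under GRH; the explicit algebraic maneuvers of Proposition \ref{ramatec} and its extensions are what allow us to avoid that assumption.
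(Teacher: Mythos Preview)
Your reduction to $n\equiv 7\pmod 8$ and your treatment of the cases $\left(\frac{2}{p}\right)=1$ or $\left(\frac{5}{p}\right)=1$ via Lemma \ref{tec} match the paper. The genuine gap is in the final subcase $\left(\frac{2}{p}\right)=\left(\frac{5}{p}\right)=-1$. There you need condition (iv) of Lemma \ref{tec} for $a=10$, i.e.\ that $\langle 1,1,10\rangle$ itself represents $n$. You propose to ``extend the argument of Proposition \ref{ramatec}'' to the residues $n\equiv 1,3\pmod 6$, but you do not carry this out, and in fact no such blanket extension exists: the integers $7,\,31,\,79,\,87,\,223$ are all $\equiv 7\pmod 8$, all $\equiv 1$ or $3\pmod 6$, all lie in the genus of $\langle 1,1,10\rangle$, yet none is represented by it. So the transfer from the companion form $f'$ to $\langle 1,1,10\rangle$ by explicit substitutions genuinely breaks down in those residue classes. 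You correctly identify this as the crux, but the extra hypothesis $p\mid n$ (forcing here $p\ge 43$) is not exploited, and without GRH there is no known bound on the exceptional set to fall back on.

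The paper avoids this obstacle rather than overcoming it. When $\ord_p(n)\ge 2$ it cites Theorem~1 of Ono--Soundararajan (the exceptions for $\langle 1,1,10\rangle$ are square-free), so $n$ is represented and Lemma \ref{tec} applies via Remark \ref{remark1}. When $\ord_p(n)=1$, only the subcase $n\equiv 5\pmod 6$ goes through $\langle 1,1,10\rangle$ and Proposition \ref{ramatec}. For $n\equiv 1\pmod 3$ the paper instead chooses $s_0\in\{1,2,5\}$ with $9\mid n-s_0^2$, writes $(n-s_0^2)/9$ as a sum of three nonzero squares (possible since $n-s_0^2\equiv 3$ or $6\pmod 8$), and then uses the identity $a^2+b^2+c^2=(2m-a)^2+(2m-b)^2+(2m-c)^2$ with $3m=a+b+c$, together with sign choices on $a,b,c$, to force all four summands coprime to $p$. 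For $n\equiv 0\pmod 3$ the paper observes $n/3\equiv 5\pmod 8$, obtains $n/3\stackrel{p}{\lra}I_3$ by Lemma \ref{tec}, and then runs Euler's four-square identity $3(a^2+b^2+c^2)=(a\pm b\pm c)^2+(a\mp b)^2+(a\mp c)^2+(b\mp c)^2$ through its sign variants to locate one with every factor coprime to $p$. These two ad hoc constructions, which bypass Ramanujan's form entirely, are the missing ingredients in your proposal.
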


\begin{proof} We may assume, without loss of generality, that $\ord_q(n) \le 1$ for any prime $q \ne p$. 
If $n$ is a sum of three squares, then $n \stackrel{p}{\lra} I_3$ by Lemma \ref{tec}. Assume that $n \equiv 7 \pmod 8$. Then $n$ is represented by both $\langle 1,1,2\rangle$ and $\langle 1,1,5\rangle$. If  either $\left(\frac{2}{p}\right)=1$ or $\left(\frac{5}{p}\right)=1$, then $n \stackrel{p}{\lra} I_4$ by Lemma \ref{tec}. 

Now, assume that $\left(\frac{2}{p}\right)=\left(\frac{5}{p}\right)=-1$. Then $\left(\frac{10}{p}\right)=1$. Assume further that $\ord_p(n)\ge 2$. Then $n$ is represented by Ramanujan's ternary quadratic form $\langle 1,1,10\rangle$ (see Theorem 1 of \cite {os}). Hence we may still apply Lemma \ref{tec} to show that  $n \stackrel{p}{\lra} I_4$, as stated in Remark \ref{remark1}. 

Finally, assume that $\ord_p(n)=1$. If $n \equiv 2 \pmod 3$, that is, $n\equiv 5 \pmod 6$, then $n$ is represented by $\langle 1,1,10 \rangle$ by Proposition \ref{ramatec}, which implies that $n \stackrel{p}{\lra} I_4$ by Lemma \ref{tec}.
Next, assume that $n\equiv 1 \pmod 3$. Then exactly one of $n-1, n-4$, and $n-25$ is divisible by $9$. Let $s_0\in\{1,2,5\}$ be the integer such that $n-(s_0)^2\equiv 0 \pmod 9$. 
Since $n-(s_0)^2\equiv 3$ or $6 \pmod 8$, the integer $\frac{n-(s_0)^2}{9}$ is a sum of three squares, whereas it is not a sum of two squares. Hence there are non-zero integers $a,b$, and $c$ such that
$$
n-(s_0)^2=(\pm a)^2+(\pm b)^2+(\pm c)^2\quad \text{and}\quad a\equiv b \equiv c\equiv 0 \pmod 3.
$$
Since $n-(s_0)^2$ is not divisible by $p$, at least one of $a,b$, and $c$ is not divisible by $p$. If $abc$ is not divisible by $p$, then $n \stackrel{p}{\lra} I_4$. Hence we may assume that $abc$ is divisible by $p$. Since $-1$ is not a square modulo $p$, we may assume that exactly one of $a,b$, and $c$ is divisible by $p$. Without loss of generality, assume that $c$ is divisible by $p$. By choosing signs suitably, we further assume that 
\begin{equation}\label{nequiv}
a\nequiv 2b \pmod p, \quad   2a\nequiv b \pmod p, \quad \text{and}\quad a\nequiv -b \pmod p.
\end{equation}
Now, we have
$$
n-(s_0)^2=(2m-a)^2+(2m-b)^2+(2m-c)^2 \quad \text{and}\quad a+b+c=3m,
$$
where $m$ is an integer. By \eqref{nequiv}, the integer $(2m-a)(2m-b)(2m-c)$ is not divisible by $p$, which implies that $n \stackrel{p}{\lra} I_4$. 

Now, assume that $n \equiv 0 \pmod 3$. Since $\frac{n}{3}\equiv 5 \pmod 8$, we have $\frac{n}{3} \stackrel{p}{\lra} I_3$ by Lemma \ref{tec}. Hence there are integers $a,b$, and $c$ such that
$$
n=3(a^2+b^2+c^2) \quad \text{and} \quad (p,abc)=1.
$$
By Euler's four-square identity, we have
$$
\begin{array}{rl}
n\!\!\!&=(1^2+1^2+1^2+0^2)(a^2+b^2+c^2+0^2) \\
 &=(a-b-c)^2+(a+b)^2+(a+c)^2+(b-c)^2 \\
 &=(a+b+c)^2+(a-b)^2+(a-c)^2+(b-c)^2 \\
 &=(a+b-c)^2+(a-b)^2+(a+c)^2+(b+c)^2 \\
 &=(a-b+c)^2+(a+b)^2+(a-c)^2+(b+c)^2.
\end{array}
$$
Assume that $a-b-c\equiv 0 \pmod p$. Then clearly, $(a+b+c)(a-b)(a-c)$ is not divisible by $p$. If $b-c$ is divisible by $p$, then $a\equiv 2b \pmod p$. This implies that  $n=a^2+b^2+c^2 \equiv 6b^2 \nequiv 0 \pmod p$, 
which is a contradiction to the fact that $n\equiv 0\pmod p$. Hence $b-c$ is not divisible by $p$ and $n \stackrel{p}{\lra} I_4$. Similarly, if  one of $(a+b+c), (a+b-c)$, and $(a-b+c)$ is divisible by $p$, then $n \stackrel{p}{\lra} I_4$. Therefore we may assume that 
$$
(a-b-c)(a+b+c)(a+b-c)(a-b+c) \not \equiv 0 \pmod p.
$$ 
Since $a$ is not divisible by $p$, we have 
\begin{equation} \label{final}
a+b\nequiv 0 \pmod p \quad \text{or} \quad a-b\nequiv 0 \pmod p.
\end{equation}
 If both $(a+c)(b-c)$ and $(a-c)(b+c)$ are divisible by $p$, then 
$$
a\equiv b \equiv -c \pmod p\quad \text{or} \quad  a\equiv b \equiv c \pmod p,
$$
which implies that $n=3(a^2+b^2+c^2)\equiv 9c^2 \not  \equiv 0 \pmod p$. This is a contradiction. Similarly, one may easily show that either
$(a-c)(b-c)$ or $(a+c)(b+c)$ is not divisible by $p$. From these and Equation \eqref{final}, we have $n \stackrel{p}{\lra} I_4$. 
\end{proof}

\section{When $n$ is not divisible by $p$}

In this section, we consider the case when a positive integer $n$ is not divisible by $p$, where $p$ is a prime greater than $3$, as in the previous section. In this case, we may assume that $n$ is square-free.  

\begin{lem} \label {calculation}
Let $n$ be a positive integer and let $p$ be a prime greater than $3$.
\begin{itemize}
\item [(i)] If $5\le p\le 13$ and $n<(16p)^2$, then $n \stackrel{p}{\lra} I_k$ for some integer $k\le 4$, except the case when $p=5$ and $n$ is $79$. In fact, $79$ is a sum of $5$ squares not divisible by $5$.
\item [(ii)]If $ p \ge 17$ and $n<(10p)^2$, then $n \stackrel{p}{\lra} I_k$ for some integer $k\le 4$.
\end{itemize}
\end{lem}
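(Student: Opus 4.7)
The plan is to treat this lemma as a finite verification. In both parts, the hypotheses restrict candidate representations of $n$ as a sum of at most $4$ squares to a computable finite set, via Lagrange's four-square theorem.

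For part (i), the constraints $5 \le p \le 13$ and $n < (16p)^2$ bound $n$ by at most $43264$, so there are only finitely many pairs $(p, n)$. For each such pair I would enumerate integer tuples $(x_1, \ldots, x_k)$ with $k \le 4$, $|x_i| \le \sqrt{n}$, and $x_1^2 + \cdots + x_k^2 = n$, and test whether some tuple satisfies $x_i \ne 0$ and $p \nmid x_i$ for every $i$; this is a routine computer search. The only pair where no admissible decomposition exists is $(p, n) = (5, 79)$, and the identity $79 = 8^2 + 3^2 + 2^2 + 1^2 + 1^2$ then establishes $79 \stackrel{5}{\lra} I_5$.

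For part (ii) the set of primes $p \ge 17$ is infinite, but the verification becomes finitary once $n$ is fixed. If $p > \sqrt{n}$, every nonzero integer of absolute value at most $\sqrt{n}$ is automatically coprime to $p$, so any representation of $n$ as a sum of at most $4$ nonzero squares suffices; all but a short explicit list of $n$ admits such a representation, and the exceptional small values are handled by direct inspection. When $\sqrt{n}/10 < p \le \sqrt{n}$, the bound $n < (10p)^2$ forces each $p$-divisible coordinate in a Lagrange decomposition $n = x_1^2 + x_2^2 + x_3^2 + x_4^2$ to lie in $\{\pm p, \pm 2p, \ldots, \pm 9p\}$, and since $p \nmid n$ at most one coordinate is $p$-divisible. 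A modification argument---invoking Lemma \ref{primitive} via the identity $p = u^2 + v^2$ when $p \equiv 1 \pmod 4$, or an Euler-type rearrangement of the kind used in the proof of Proposition \ref{divisible2} when $p \equiv 3 \pmod 4$---removes that offending coordinate, completing the verification.

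The main obstacle is the uniformity over infinitely many primes in part (ii). The saving point is the bound $n < (10p)^2$, which confines any $p$-divisible coordinate to only nineteen possible values, reducing the structural analysis to a bounded number of residue patterns per prime; combined with the growth of $r_4(n)$ supplied by Jacobi's four-square formula, this ensures that the number of Lagrange decompositions of $n$ with every coordinate coprime to $p$ is positive.
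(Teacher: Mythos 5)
Part (i) of your proposal coincides with the paper's treatment (a finite computer check over all $n<(16p)^2$ for $p\in\{5,7,11,13\}$), and your first subcase of part (ii) ($p>\sqrt n$, so every nonzero coordinate is automatically coprime to $p$) is sound. The problems are in the range $\sqrt n/10<p\le\sqrt n$. First, the assertion that ``since $p\nmid n$ at most one coordinate is $p$-divisible'' is false: e.g.\ $580=17^2+17^2+1^2+1^2$ has two coordinates divisible by $17$ while $17\nmid 580$ (divisibility of $n$ by $p$ only constrains the coordinates when \emph{three} of them are $p$-divisible). You also ignore zero coordinates, which are $p$-divisible but do not lie in $\{\pm p,\dots,\pm 9p\}$. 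Second, and more seriously, the ``modification argument'' that is supposed to remove an offending coordinate is not actually supplied, and the tools you cite do not perform it. Lemma~\ref{primitive} with $p=u^2+v^2$ turns $(pc)^2$ into a sum of two squares coprime to $p$, which raises the count to five squares, not four; to stay at four you would need $x_1^2+x_j^2=y_1^2+y_j^2$ with $p\nmid y_1y_j$, but Lemma~\ref{primitive} requires $p\mid x_1^2+x_j^2$, which fails when $x_j$ is coprime to $p$. The Euler-type rearrangements of Proposition~\ref{divisible2} derive their contradictions from $n\equiv 0\pmod p$ or from arranged conditions such as $\bigl(\frac{n-s_0^2}{p}\bigr)\ne\bigl(\frac{5}{p}\bigr)$; here $p\nmid n$ and no such condition has been set up, so e.g.\ all four alternative representations could simultaneously contain a $p$-divisible entry. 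Finally, the closing appeal to Jacobi's formula versus a count of ``bad'' representations could in principle be made rigorous, but it needs an explicit upper bound on $\sum_j r_3(n-j^2p^2)$ against an explicit lower bound on $r_4(n)$, none of which you provide.

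For comparison, the paper sidesteps all of this: for $p\ge 73$ it picks an integer $s$ with $p\nmid s$ and $s^2\le n$ such that $0<n-s^2<p^2$ (possible because consecutive squares near $\sqrt n<10p$ are spaced by less than $p^2/($a few$)$, and one can always dodge the single residue class $s\equiv 0\pmod p$), offering two candidate values of $s$ so that at least one remainder is a sum of at most three nonzero squares; every coordinate of that remainder is then automatically coprime to $p$ since it is smaller than $p$ in absolute value. The remaining primes $17\le p\le 71$ are handled by the same direct computation as part (i). If you want to salvage your structural route, you would need to either carry out the counting argument with effective constants or import the congruence-engineering of Theorems~\ref{0or2} and~\ref{1}; as written, the argument does not close.
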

\begin{proof}
To prove that $n$ is a sum of $k$ squares not divisible by $p$ for some positive integer $k$, we may assume that $0\le\ord_q(n)\le 1$ for any prime $q\ne p$. By Lagrange's four square theorem, we  may also assume that $n\ge p^2$.

First, assume that $p^2 \le n < (10p)^2$. Then there are integers  $u \ (1\le u\le 9)$ and $a \ (0\le a \le p-1)$ such that $(up+a)^2\le n < (up+a+1)^2$. We assume that $p\ge 73$. 
If $a=0$, then
$$
n-(up-1)^2 \le n-(up-2)^2 < (up+1)^2-(up-2)^2 \le 6up-3<p^2.
$$ 
If $a=1$, then
$$
n-(up+1)^2 \le n-(up-2)^2 < (up+2)^2-(up-2)^2 \le 8up<p^2,
$$ 
and if $a \ge 2$, then
$$
n-(up+a)^2 \le n-(up+a-1)^2 < (up+a+1)^2-(up+a-1)^2 \le 4up+4a<p^2.
$$
Therefore, there is a positive integer $k$ less than or equal to $3$ such that
$$
\begin{cases}
\text{$n-(up-1)^2$ or $n-(up-2)^2$} \stackrel{p}{\lra} I_k \ &\text{if $a=0$},\\
\text{$n-(up-2)^2$  or $n-(up+1)^2$} \stackrel{p}{\lra} I_k \ &\text{if $a=1$},\\
\text{$n-(up+a-1)^2$ or $n-(up+a)^2$} \stackrel{p}{\lra} I_k \ &\text{if $a\ge 2$}.
\end{cases}
$$
Therefore if $p\ge 73$ and $n<(10p)^2$, then $n \stackrel{p}{\lra} I_k$ for some integer $k\le 4$.

For the case when $5\le p \le 71$, one may check by a direct computation that $n \stackrel{p}{\lra} I_k$ for some integer $k \le 4$, except the case when $p=5$ and $n=79$. Note that essentially different representations of $79$ by $I_4$  are  
$$
79=1^2+2^2+5^2+7^2=2^2+5^2+5^2+5^2=3^2+3^2+5^2+6^2.
$$  
Hence $79$ is not represented by a sum of $4$ squares not divisible by $5$. Since $79=1^2+1^2+2^2+3^2+8^2$, $79$ is a sum of $5$ squares not divisible by $5$.   
\end{proof}


\begin{thm}\label{0or2}
If $n\equiv 0$ or $2\pmod 3$, then $n \stackrel{p}{\lra} I_k$ for some integer $k\le 4$.
\end{thm}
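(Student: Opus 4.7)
By Lemma \ref{calculation}, I may assume $n$ is large (exceeding $(16p)^2$ for $5\le p\le 13$ and $(10p)^2$ for $p\ge 17$), squarefree, and of course $p\nmid n$. The strategy is to reduce to the divisible-by-$p$ setting treated in Section~3: find $s$ with $p\nmid s$ such that $m:=n-s^2>0$ is divisible by $p$ and a sum of three squares. Granting this, Lemma~\ref{tec} with $a=1$ (using that $\langle 1,1,1\rangle$ has class number one, $1=1^2$, and $\left(\frac{1}{p}\right)=1$) gives $m\stackrel{p}{\lra} I_3$, hence $n=s^2+m\stackrel{p}{\lra} I_4$.

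When $n$ is a quadratic residue modulo $p$, I would take $s_0\in\{1,\dots,p-1\}$ with $s_0^2\equiv n\pmod p$ and consider $s=s_0+kp$ for $k\ge 0$ with $s<\sqrt n$. The lower bound on $n$ leaves an arithmetic progression of many admissible $s$, along which $m=n-s^2$ passes through many residues modulo $8$; by varying $k$ (and the sign of $s_0$) I force $m$ out of the exceptional form $4^t(8j+7)$, so that $m$ is a sum of three squares. The hypothesis $n\equiv 0$ or $2\pmod 3$ is used in this step to control $m\pmod{24}$, ensuring that the required adjustment is available.

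When $n$ is a non-residue modulo $p$, $p$ never divides $n-s^2$ and the above fails. I would then follow the identity-based endgame of Proposition~\ref{divisible2}: for $n\equiv 0\pmod 3$, write $n=3m'$ with $p\nmid m'$, pick a representation $m'=a^2+b^2+c^2$ with $p\nmid abc$ (existence addressed below), and apply the Euler-style identity
\[
3(a^2+b^2+c^2)=(a+b+c)^2+(a-b)^2+(a-c)^2+(b-c)^2
\]
together with its three sign variants. Exactly as in Proposition~\ref{divisible2}, the sign flexibility produces four squares not divisible by $p$ summing to $n$. For $n\equiv 2\pmod 3$, a parallel construction uses Proposition~\ref{ramatec} when $n\equiv 5\pmod 6$ to write $n=a^2+b^2+c^2+(3c)^2$ with a similar sign adjustment, while the case $n\equiv 2\pmod 6$ is reduced to a nearby value by a small subtraction.

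The main obstacle is the existence of the representation $m'=a^2+b^2+c^2$ with $p\nmid abc$ in the non-residue case. Since $\langle 1,1,1\rangle$ has class number one, $r(m',I_3)$ is given by the Minkowski--Siegel formula (as in Lemma~\ref{t-square}), while the ``bad'' representations with $p\mid a$ are counted by $r(m',\langle p^2,1,1\rangle)$, smaller by a factor on the order of $1/p$. A direct comparison, analogous to Lemma~\ref{class1}, shows that good representations exist once $m'$ is sufficiently large relative to $p$; the remaining small cases fall within the explicit range already verified in Lemma~\ref{calculation}.
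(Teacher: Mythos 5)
Your overall plan diverges from the paper's, and it has two genuine gaps. First, the main branch (the case $\left(\frac{n}{p}\right)=1$) reduces to showing $m:=n-s^2\stackrel{p}{\lra}I_3$ for an $m$ divisible by $p$, via Lemma~\ref{tec} with $a=1$. But Lemma~\ref{tec} is stated, and its proof only works, for $p\equiv 3\pmod 4$: the key step is that $x_1\equiv 0\pmod p$ forces $x_2\equiv x_3\equiv 0\pmod p$ because $-1$ is a non-residue. For $p\equiv 1\pmod 4$ this fails and so does the conclusion; e.g.\ $m=35$ is a sum of three squares and divisible by $5$, but its only representation is $1^2+3^2+5^2$, so $35\stackrel{5}{\nra}I_3$. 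The substitute results for $p\equiv 1\pmod 4$ (Lemma~\ref{three1}, Proposition~\ref{divisible1}) only deliver $I_4$ for such $m$, which after adding back $s^2$ gives five squares, not four. Second, in the non-residue branch your existence argument for $m'=a^2+b^2+c^2$ with $p\nmid abc$ compares $r(m',I_3)$ with $r(m',\langle p^2,1,1\rangle)$; the latter form has class number bigger than one, so the Minkowski--Siegel formula controls only its genus average, and the cuspidal error is not bounded anywhere in the paper. The resulting ``sufficiently large'' threshold is not effective and certainly not covered by Lemma~\ref{calculation}, which only checks $n<(16p)^2$. Moreover, the sign-variant analysis you import from Proposition~\ref{divisible2} derives its contradictions from $n\equiv 0\pmod p$ (e.g.\ $a\equiv 2b\pmod p$ implies $n\equiv 6b^2\not\equiv 0$), which is unavailable here, and the case $n\equiv 2\pmod 6$ is left to an unspecified ``small subtraction.''

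For contrast, the paper does the opposite of your reduction: it chooses $s=6k+s_0$ with $6k+s_0\not\equiv 0\pmod p$ so that $n-(6k+s_0)^2$ is positive, $\equiv 2\pmod 3$, a sum of three squares, \emph{coprime to} $p$, and satisfies $\left(\frac{n-(6k+s_0)^2}{p}\right)\ne\left(\frac{5}{p}\right)$; a counting argument over $\mathbb{F}_p$ shows such $k$ exists. Writing $n-(6k+s_0)^2=a^2+b^2+c^2$ with $a+b+c=3m$, the rotation identity $a^2+b^2+c^2=(2m-a)^2+(2m-b)^2+(2m-c)^2$ then repairs any divisibility by $p$ among $a,b,c$: the condition on the Legendre symbol rules out the bad configuration $n-(6k+s_0)^2\equiv 5m^2\pmod p$, and a descent in powers of $3$ (using Lemma~\ref{primitive}) guarantees the representation can be chosen with $p\nmid m$. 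If you want to salvage your approach, you would at minimum need a $p\equiv 1\pmod 4$ analogue of Lemma~\ref{tec} yielding $I_3$, which does not exist in general.
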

\begin{proof} 
Reacll that we are assuming that $p$ is a prime greater than $3$ and $n$ is a square-free positive integer not divisible by $p$. By Lemma \ref{calculation}, we may further assume that 
\begin{equation} \label{cond}
\begin{cases} n\ge (16p)^2 \quad &\text{if $5\le p\le 13$,}\\
             n\ge (10p)^2 \quad &\text{if $p \ge 17$}.\\
             \end{cases}  
\end{equation}
First, we will prove that there exist integers $k$ and $s_0\in\{0,1,2,3\}$ such that 
\begin{enumerate}
\item $\displaystyle n-(6k+s_0)^2\ne 4^{\alpha}(8\beta+7)$ for any non-negative integers $\alpha$ and $\beta$;
\item $\displaystyle n-(6k+s_0)^2\equiv 2 \pmod 3$;
\item $\displaystyle n-(6k+s_0)^2>0$;
\item $\displaystyle\left(\frac{n-(6k+s_0)^2}{p}\right)\ne\left(\frac{5}{p}\right), 0$;
\item $\displaystyle 6k+s_0\nequiv 0\pmod p$.
\end{enumerate}
We choose an integer $s_0$ such that 
$$
s_0=\begin{cases} 0 \quad &\text{if $n \not \equiv 3 \pmod 4$ and $n\not \equiv 0 \pmod 3$,}\\
                   1 \quad &\text{if $n \not \equiv 1 \pmod 4$ and $n\equiv 0 \pmod 3$,}\\
                   2 \quad &\text{if $n  \equiv 1 \pmod 4$ and $n\equiv 0 \pmod 3$,}\\
                   3 \quad &\text{if $n  \equiv 3 \pmod 4$ and $n\not \equiv 0 \pmod 3$.}\\ 
\end{cases}
$$
Then clearly, the first and the second conditions hold for any integer $k$. Now, we will find an integer $k$ satisfying the above conditions (3)$\sim$(5) for this integer $s_0$. 
If $p$ is $5$ or $7$, then one may easily find an integer $k$ such that the above conditions (3)$\sim$(5) are all satisfied. Hence we may assume that $p$ is greater than $7$. In fact, we will choose an integer $k$ in the set $T=\{1,2,\cdots,\frac{p+9}{2}\}$. Since $n$ satisfies \eqref{cond}, we have $n-(6k+s_0)^2>0$ for any $k \in T$ and any $s_0\in \{0,1,2,3\}$. It is well known that the number of solutions $(x,y)$ of the equation $x^2+y^2=n$ over $\mathbb F_p$ is greater than or equal to $p-1$. Hence the number of $x_0$'s such that $n-x_0^2$ is a zero or a square in $\mathbb F_p$ is at least $\frac {p-1}2$. Since $\vert T \vert=\frac{p+9}{2}$, there are at least four $k\in T$ such that 
$n-(6k+s_0)^2$ is a zero or a square in $\mathbb F_p$. Similarly, there are at least four $k\in T$ such that $n-(6k+s_0)^2$ is a zero or a non-square in $\mathbb F_p$. Therefore there exists a positive integer $k\in T$ such that
$$
\displaystyle\left(\frac{n-(6k+s_0)^2}{p}\right)\ne\left(\frac{5}{p}\right), 0 \quad \text{and}\quad \displaystyle6k+s_0\nequiv 0\pmod p.
$$ 

Now, by (1) and (3), there are integers $a,b$, and $c$ such that 
\begin{equation} \label{signs}
n-(6k+s_0)^2=(\pm a)^2+(\pm b)^2+(\pm c)^2.
\end{equation}
Since $n-(6k+s_0)^2\equiv 2 \pmod 3$, we may suitably choose signs in Equation \eqref{signs} so that $a+b+c$ is divisible by $3$. 
If  $a+b+c=3m$ for some integer $m$, then we have  
\begin{equation} \label{hard}
n-(6k+s_0)^2=a^2+b^2+c^2=(2m-a)^2+(2m-b)^2+(2m-c)^2.
\end{equation}
 If $abc$ is not divisible by $p$, then $n \stackrel{p}{\lra} I_4$. Now, assume that at least two of $a,b$, and $c$ are divisible by $p$. Without loss of generality, we assume that both $a$ and $b$ are divisible by $p$. Since $c$ is not divisible by $p$  by $(4)$, $m$ is not divisible by $p$. Therefore, $(2m-a)(2m-b)(2m-c)$ is not divisible by $p$. This implies that $n \stackrel{p}{\lra} I_4$. 
 
 Finally, assume that exactly one of $a,b$, and $c$ is divisible by $p$. Without loss of generality, we assume that $c$ is divisible by $p$ and $ab$ is not divisible by $p$. We will show that there are integers $a,b$, and $c$ satisfying \eqref{hard} such that $m$ is not divisible by $p$. Suppose, on the contrary, that $m$ is divisible by $p$.  Since $n-(6k+s_0)^2\equiv 2 \pmod 3$, exactly one of $a,b$, and $c$ is divisible by $3$. If $a$ is divisible by $3$, then 
 $$
 n-(6k+s_0)^2=(-a)^2+b^2+c^2 \quad \text{and} \quad -a+b+c=3\left(m-\frac23a\right),
 $$
  where $(m-\frac23a)$ is not divisible by $p$. The same argument can be applied to the case when $b$ is divisible by $3$.  Hence we may assume that $c$ is divisible by $3$.  Since $a+b\equiv 0 \pmod 3$, there are integers $b_1$ and $c_1$ such that  $a+b=3b_1$ and $c=3c_1$. Then, we have
$$
\begin{array}{ll}
n-(6k+s_0)^2 &=a^2+(-a+3b_1)^2+(3c_1)^2\\
  &=(a-2b_1+2c_1)^2+(-a+b_1+2c_1)^2+(-2b_1-c_1)^2\\
  &=(a-2b_1-2c_1)^2+(-a+b_1-2c_1)^2+(-2b_1+c_1)^2.
\end{array}
$$
Note that both $b_1$ and $c_1$ are divisible by $p$. By applying the similar argument given above to this situation, we may assume that $-2b_1+c_1\equiv 0 \pmod 3$ and $-2b_1-c_1\equiv 0 \pmod 3$. This implies that $b_1\equiv c_1\equiv 0 \pmod 3$. Now,  suppose that $t$, $b_t$, and $c_t$ are integers such that 
$$
a+b=3^t b_t,  \  \  c=3^tc_t, \quad \text{and} \quad \text{ either $b_t$ or $c_t$ is not divisible by $3$.}\ 
$$
 Note that both $b_t$ and $c_t$ are divisible by $p$.  Let $x_t, y_t$ be integers such that 
 $$
 x_t^2+2y_t^2=3^{2t} \quad \text{and} \quad x_ty_t \not \equiv 0 \pmod 3.
 $$
 Note that such integers always exist by Lemma \ref{primitive}. Then we have
$$
\begin{array}{rl}
\!\!n-(6k+s_0)^2&\!\!\!\!\!=\!a^2+(-a+3^tb_t)^2+(3^tc_t)^2\\
&\!\!\!\!\!=\!\left(a+\frac{(x_t-3^t)}{2}b_t+y_tc_t\right)^2\!\!\!+\!\left(-a+\frac{(x_t+3^t)}{2}b_t+y_tc_t\right)^2\!\!\!+\!(y_tb_t-x_tc_t)^2\\
&\!\!\!\!\!=\!\left(a+\frac{(x_t-3^t)}{2}b_t-y_tc_t\right)^2\!\!\!+\!\left(-a+\frac{(x_t+3^t)}{2}b_t-y_tc_t\right)^2\!\!\!+\!(y_tb_t+x_tc_t)^2.
\end{array}
$$
Now, by applying the same argument given above, we may assume that  
$$
y_{t}b_{t}-x_{t}c_{t} \equiv 0 \pmod 3 \quad  \text{and}  \quad  y_{t}b_{t}+x_{t}c_{t} \equiv 0 \pmod 3,
$$
which implies that $b_t \equiv c_t \equiv 0 \pmod 3$. This is a contradiction. Therefore, we may assume that the integer $m$ given in \eqref{hard} is not divisible by $p$. 
Since $c$ is divisible by $p$  and $abm$ is not divisible by $p$, $2m-c$ is not divisible by $p$. If $2m-a$ is divisible by $p$, then $a\equiv 2m\pmod p$ and $b\equiv m\pmod p$. This implies that $n-(6k+s_0)^2\equiv 5m^2\pmod p$, which is a contradiction to the fact that $\left(\frac{n-(6k+s_0)^2}{p}\right)\ne\left(\frac{5}{p}\right)$. Therefore $2m-a$ is not divisible by $p$. By  similar reasoning, $2m-b$ is not divisible by $p$. Therefore, by \eqref{hard},  we have $n \stackrel{p}{\lra} I_4$. This completes the proof. 
\end{proof}

\begin{thm}\label{1}
If $n\equiv 1\pmod 3$, then $n \stackrel{p}{\lra} I_k$ for some integer $k\le 4$, except the case when $p=5$ and $n=79$. 
\end{thm}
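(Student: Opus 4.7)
My approach follows the template of Theorem \ref{0or2}, but must accommodate an obstruction specific to $n \equiv 1 \pmod 3$. By Lemma \ref{calculation} I reduce to the case of square-free $n$ with $(n,p)=1$ and $n \geq (16p)^2$ for $p \leq 13$ or $n \geq (10p)^2$ for $p \geq 17$, setting aside the exceptional pair $(p,n) = (5, 79)$. As before I seek $s = 6k + s_0$ so that $N := n - s^2$ is a sum of three squares $a^2 + b^2 + c^2$ with $a+b+c = 3m$, allowing the Euler identity
\[
a^2 + b^2 + c^2 = (2m-a)^2 + (2m-b)^2 + (2m-c)^2
\]
to produce an alternative four-square representation of $n$, one of whose summand sets is entirely coprime to $p$.

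The new wrinkle is that $s \not\equiv 0 \pmod 3$ is now forced (to have $N \equiv 0 \pmod 3$), so $a^2+b^2+c^2 \equiv 0 \pmod 3$ splits into two cases: either all three of $a,b,c$ are divisible by $3$, or none of them are. In the ``none'' case, the two sign patterns making $a+b+c \equiv 0 \pmod 3$ differ by overall negation, so the Euler transformation degenerates to $(a,b,c) \mapsto (-a,-b,-c)$ and produces nothing new. To bypass this entirely I would impose the stronger congruence $N \equiv 0 \pmod 9$, forcing $N = 9(a'^2 + b'^2 + c'^2)$ and $(a,b,c) = (3a', 3b', 3c')$ into the ``all divisible by $3$'' case with $m = a' + b' + c'$ automatically an integer.

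The selection step is then to find $s = 6k + s_0$ satisfying: $0 < s^2 < n$; the mod-$9$ condition $s^2 \equiv n \pmod 9$ (which cuts $s$ down to two residues mod $9$, automatically coprime to $3$); the mod-$8$ conditions guaranteeing that neither $N$ nor $N/9$ has the form $4^\alpha(8\beta+7)$; the constraint $s \not\equiv 0 \pmod p$; and the Legendre condition $\left(\frac{N}{p}\right) \neq \left(\frac{5}{p}\right), 0$. The counting argument from Theorem \ref{0or2}---producing $\geq (p-1)/2$ admissible residues of $s \pmod p$ from the equation $x^2 + y^2 = n$ over $\mathbb{F}_p$---combined with the other local constraints yields a valid $s$ in the window provided by Lemma \ref{calculation}. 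The sharper threshold $n \geq (16p)^2$ for $p \leq 13$, as opposed to $(10p)^2$, is presumably calibrated to absorb the extra density loss from the mod-$9$ constraint.

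With the representation $(a,b,c) = (3a',3b',3c')$ secured, the divisibility-by-$p$ case analysis proceeds exactly as in Theorem \ref{0or2}: if $abc \not\equiv 0 \pmod p$, we are done; if exactly two of $a,b,c$ are divisible by $p$ then $m \not\equiv 0 \pmod p$ and $(2m-a)(2m-b)(2m-c)$ is coprime to $p$; and if exactly one (say $a$) is divisible by $p$, one first arranges $m \not\equiv 0 \pmod p$ by, if necessary, flipping the sign of $b$ or $c$ to replace $m$ by $m - 2b'$ or $m - 2c'$ (which stays an integer precisely because $b, c$ are divisible by $3$, and is nonzero mod $p$ because they are not), and then the Legendre condition excludes $2m - a, 2m - b, 2m - c \equiv 0 \pmod p$ via the $N \equiv 5m^2 \pmod p$ contradiction developed in Theorem \ref{0or2}. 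The pair $(p,n) = (5,79)$ is covered by Lemma \ref{calculation}(i), where the representation $79 = 1^2+1^2+2^2+3^2+8^2$ provides a five-square expression while each of the three four-square representations of $79$ listed there contains a multiple of $5$. The principal obstacle is the selection step: threading through the mod-$8$, mod-$9$, and mod-$p$ constraints simultaneously, which is exactly what the tight bound $n \geq (16p)^2$ is designed to permit.
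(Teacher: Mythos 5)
Your proposal is correct and follows essentially the same route as the paper: both reduce via Lemma \ref{calculation}, choose $s\nequiv 0\pmod 3$ with $n-s^2\equiv 0\pmod 9$ (the paper writes $s=18k+s_0$, $s_0\in\{1,2,4,5,7,8\}$), write $n-s^2=(3a)^2+(3b)^2+(3c)^2$ from a three-square representation of $(n-s^2)/9$, and run the same Euler-identity case analysis with the sign flip to make $m\nequiv 0\pmod p$ and the $5m^2$ contradiction from the Legendre condition. Your diagnosis of why the mod $9$ (rather than mod $3$) condition is needed matches the structural difference between this theorem and Theorem \ref{0or2}; just note that $N\equiv 0\pmod 9$ does not force every three-square representation of $N$ to have all entries divisible by $3$ --- one simply selects the representation obtained by scaling one of $N/9$.
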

\begin{proof}
Recall that we are assuming that $p$ is a prime greater than $3$ and $n$ is a square-free positive integer not divisible by $p$. By Lemma \ref{calculation}, we may further assume that \eqref{cond} holds. 
Then, similarly to Theorem \ref{0or2}, one may easily show that there exist integers $k$ and $s_0\in\{1,2,4,5,7,8\}$ such that
\begin{enumerate}
\item $\displaystyle n-(18k+s_0)^2 \ne 4^{\alpha}(8\beta+7)$ for any integers $\alpha$ and $\beta$;
\item $\displaystyle n-(18k+s_0)^2\equiv 0 \pmod 9$;
\item $\displaystyle n-(18k+s_0)^2>0$; 
\item $\displaystyle\left(\frac{n-(18k+s_0)^2}{p}\right)\ne\left(\frac{5}{p}\right), 0$;
\item $\displaystyle 18k+s_0\nequiv 0\pmod p$.
\end{enumerate}
Since  $ \frac {n-(18k+s_0)^2}9$ is a sum of three squares, there are integers  $a,b,c$, and  $m$ such that $a+b+c=m$ and
$$
n-(18k+s_0)^2=(3a)^2+(3b)^2+(3c)^2=(2m-3a)^2+(2m-3b)^2+(2m-3c)^2.
$$
 Note that at least one of $a,b$, and $c$ is not divisible by $p$. If $abc$ is not divisible by $p$, then $n \stackrel{p}{\lra} I_4$. Assume that exactly two of $a,b$, and $c$ are divisible by $p$. Without loss of generality, assume that both $a$ and $b$ are divisible by $p$. Then neither $m$ nor $(2m-3a)(2m-3b)(2m-3c)$ is divisible by $p$, which implies that $n \stackrel{p}{\lra} I_4$. Assume that exactly one of $a,b$, and $c$ is divisible by $p$. Without loss of generality, we assume that $a$ is divisible by $p$ and $bc$ is not divisible by $p$.  
By changing a sign of $b$, if necessary, we may assume that $a+b+c=m$ is not divisible by $p$. Then clearly, $2m-3a$ is not divisible by $p$. If $2m-3b$ is divisible by $p$, then $n-(18k+s_0)^2\equiv 5m^2\pmod p$. This is a contradiction to (4). 
 Hence $2m-3b$ is not divisible by $p$. Similarly,  we may also show that $2m-3c$ is not divisible by $p$. Therefore $n \stackrel{p}{\lra} I_4$. This completes the proof.
\end{proof}

By combining Propositions \ref{divisible1} and \ref{divisible2}, Theorems \ref{0or2} and \ref{1}, we have the following:

\begin{thm}  Let $p$ be a prime greater than or equal to $5$. Any positive integer $n$ is a sum of  at most $4$ squares not divisible by $p$, except the case when $p=5$ and $n=79$. In the exceptional case, $79$ is a sum of $5$ squares not divisible by $5$. 
\end{thm}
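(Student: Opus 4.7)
The theorem is essentially a bookkeeping corollary of the four results established in the preceding two sections, together with a direct verification for the exceptional value $n=79$. My plan is to split according to whether $p$ divides $n$, and within each case to invoke the appropriate proposition or theorem.

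First, I would handle the case $p \mid n$. If $p \equiv 1 \pmod 4$, Proposition \ref{divisible1} directly gives $n \stackrel{p}{\lra} I_k$ for some $k\le 4$. If $p \equiv 3 \pmod 4$ (and $p>3$, which is guaranteed since $p\ge 5$ rules out $p=3$), Proposition \ref{divisible2} gives $n \stackrel{p}{\lra} I_k$ for $k=3$ or $4$. Nothing needs to be added here.

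Next, I would handle the case $p \nmid n$. As noted at the beginning of Section 4, we may reduce to $n$ square-free. Splitting according to the residue of $n$ modulo $3$: if $n \equiv 0$ or $2 \pmod 3$, Theorem \ref{0or2} gives $n \stackrel{p}{\lra} I_k$ with $k \le 4$; if $n \equiv 1 \pmod 3$, Theorem \ref{1} gives the same conclusion except in the single case $p=5, n=79$.

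The only remaining step is to verify the exceptional case. Since $79 = 1^2+1^2+2^2+3^2+8^2$ and none of $1,2,3,8$ is divisible by $5$, we have $79 \stackrel{5}{\lra} I_5$. To see that $79$ is \emph{not} a sum of four squares not divisible by $5$, I would appeal to the computation already recorded in the proof of Lemma \ref{calculation}, which lists the three essentially different four-square representations
\[
79 = 1^2+2^2+5^2+7^2 = 2^2+5^2+5^2+5^2 = 3^2+3^2+5^2+6^2,
\]
each of which contains a summand divisible by $5$. This shows $S(5)\ge 5$, while the remaining cases show $S(5)\le 5$, so in fact $S(5)=5$ and $S(p)=4$ for $p\ge 7$. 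The main obstacle here is nothing more than making sure all four residue classes $(p\bmod 4, n \bmod p)$ and $(n\bmod 3)$ are covered without overlap or gap; since the previous sections were arranged precisely to cover them, the assembly is routine.
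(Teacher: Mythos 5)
Your proposal matches the paper exactly: the paper derives this theorem in one line by combining Propositions \ref{divisible1} and \ref{divisible2} (the case $p\mid n$, split by $p\bmod 4$) with Theorems \ref{0or2} and \ref{1} (the case $p\nmid n$, split by $n\bmod 3$), and the verification that $79$ is a sum of five but not four squares prime to $5$ is already recorded in the proof of Lemma \ref{calculation}, just as you cite it. Your case assembly is complete and correct, so there is nothing to add.
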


\end{document}